\definecolor{vert}{rgb}{0,0.6,0}
\theoremstyle{plain}
\newtheorem{thm}{Theorem}[section]
\newtheorem{defn}{Definition}
\newtheorem{ex}{Example}
\newtheorem{lem}[thm]{Lemma}
\newtheorem{cor}[thm]{Corollary}
\newtheorem{prop}[thm]{Proposition}
\theoremstyle{remark}
\newtheorem{rem}{\bf{Remark}}
\numberwithin{equation}{section}
\newcommand{\R}{\mathbb{R}}
\newcommand{\T}{\mathbb{T}}
\newcommand{\Z}{\mathbb{Z}}
\newcommand{\cL}{\mathcal{L}}
\newcommand{\cM}{\mathcal{M}}
\newcommand{\cS}{\mathcal{S}}
\newcommand{\cR}{\mathcal{R}}
\newcommand{\Lip}{{\rm Lip\,}}
\newcommand{\al}{\alpha}
\newcommand{\del}{\delta}
\newcommand{\ep}{\varepsilon}
\newcommand{\lam}{\lambda}
\newcommand{\sig}{\sigma}
\newcommand{\Del}{\Delta}
\newcommand{\Lam}{\Lambda}
\newcommand{\ol}{\overline}
\newcommand{\Div}{{\rm div}\,}
\begin{document}

\title[Generalized ergodic problems]{Generalized ergodic problems: existence and uniqueness structures of solutions}

\author{Wenjia Jing}
\address[W. Jing]{
Yau Mathematical Sciences Center,
Tsinghua University, No.1 Tsinghua Yuan,
Beijing 100084, China }
\email{wjjing@tsinghua.edu.cn}

\author{Hiroyoshi Mitake}
\address[H. Mitake]{
Graduate School of Mathematical Sciences, 
University of Tokyo 
3-8-1 Komaba, Meguro-ku, Tokyo, 153-8914, Japan}
\email{mitake@ms.u-tokyo.ac.jp}

\author{Hung V. Tran}
\address[Hung V. Tran]
{
Department of Mathematics, 
University of Wisconsin Madison, Van Vleck hall, 480 Lincoln drive, Madison, WI 53706, USA}
\email{hung@math.wisc.edu}

\keywords{Generalized ergodic problems; contact Hamilton-Jacobi equations; non strictly monotone Hamiltonians; nonlinear adjoint method; nonuniqueness of solutions;  uniqueness structures; viscosity solutions}
\subjclass[2010]{
35B10 
35B27 
35B40 
35D40  
35F21 
49L25 
}

\maketitle

\begin{abstract}
We study a generalized ergodic problem (E), which is a Hamilton-Jacobi equation of contact type, in the flat $n$-dimensional torus.
We first obtain existence of solutions to this problem under quite general assumptions.
Various examples are presented and analyzed to show that (E) does not have unique solutions in general.
We then study uniqueness structures of solutions to (E) in the convex setting by using the nonlinear adjoint method.
\end{abstract}


\section{Introduction}

In this paper, we focus on the following equation
\[
{\rm (E)} \qquad
H(x,u,Du) = c \qquad \text{ in } \T^n.
\]
Here, $\T^n =\R^n/\Z^n$ is the flat $n$-dimensional torus,
and the Hamiltonian $H=H(x,r,p):\T^n \times \R \times \R^n \to \R$ is a given continuous function.
We seek for a pair of unknowns $(u,c) \in C(\T^n) \times \R$ that solves (E) in the viscosity sense.
We use $Du$ to denote the spatial gradient of $u$.
We are always concerned  with viscosity solutions, and the adjective ``viscosity" is often omitted in the paper.

Our main goals in this paper are twofold. First of all, we obtain existence results of solutions to (E) under quite general assumptions.
Second, it is well-known in the theory of viscosity solutions that if $r \mapsto H(x,r,p)$ is not strictly monotone, then (E) might not have unique solutions (see Examples \ref{ex-4}--\ref{ex-7} in Section \ref{sec:ex}).
It is therefore of our main interests to understand why this phenomenon appears, and to describe uniqueness structures of solutions to (E).

We call (E) a generalized ergodic problem.
In various other contexts, (E) is also called a Hamilton-Jacobi equation of contact type. 

\subsection{Assumptions}
We list here the main assumptions on Hamiltonian $H$ that are used in the paper.
\begin{itemize}
\item[(H1)] $H$ is uniformly Lipschitz in $r$, that is, there exists a constant $C_1>0$ such that
\[
|H(x,r,p) - H(x,s,p)| \leq C_1 | r-s| \quad \text{for all } (x,p) \in \T^n \times  \R^n, \, r,s \in \R.
\]

\item[(H2a)] $H$ is coercive in $p$, that is,
\[
\lim_{|p| \to \infty} H(x,0,p)=+\infty \quad \text{ uniformly for } x \in \T^n.
\]

\item[(H2b)] $H$ is superlinear in $p$, that is,
\[
\lim_{|p| \to \infty} \frac{H(x,0,p)}{|p|}=+\infty \quad \text{ uniformly for } x \in \T^n.
\]
\end{itemize}
It is clear that (H2b) is stronger than (H2a). We will assume either (H2a) or (H2b) in each of our results on existence of solutions to (E).
To address the uniqueness structure, we need to assume the following assumptions.

\begin{itemize}

\item[(H3)] $H \in C^2(\T^n \times \R \times \R^n)$, and
\[
\lim_{|p| \to \infty} \left(\frac{1}{2}H(x,r,p)^2 + D_xH(x,r,p)\cdot p\right)=+\infty \quad \text{ uniformly for } (x,r) \in \T^n \times \R.
\]
\item[(H4)] The map $r \mapsto H(x,r,p)$ is nondecreasing for all $(x,p) \in \T^n \times  \R^n$.

\item[(H5)] The map $(r,p) \mapsto H(x,r,p)$ is convex for all   $x \in \T^n$.
\end{itemize}
It is worth noting that (H3) and (H4) are quite standard assumptions. 
We only require that $H$ is nondecreasing in $r$ in (H4), so it may fail to be strictly increasing.
Condition (H5) however is rather strong since convexity is imposed both in $r$ and $p$.
In any case, nowhere in this paper do we require $H$ to be uniformly convex in $p$.

\subsection{Main results}
We first state two existence results for solutions to (E).
The first one is quite a standard result in light of the classical Perron method.
\begin{thm}\label{thm:exist1}
Assume {\rm (H1), (H2a)}.
Assume further that there exist $c\in \R$, and $\psi, \varphi \in \Lip(\T^n)$ such that $\psi \leq \varphi$,
$\psi$ and $\varphi$ are a viscosity subsolution and a viscosity supersolution to {\rm (E)}, respectively. 
Then, {\rm (E)} has a viscosity solution $u\in \Lip(\T^n)$ with $c\in \R$ given by the assumptions.
\end{thm}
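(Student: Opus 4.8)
The plan is to run Perron's method, with the crucial twist that assumptions {\rm (H1)} and {\rm (H2a)} produce a uniform Lipschitz bound on all competitors, which lets me sidestep any comparison principle (such a principle may well fail here, since $H$ is not assumed strictly monotone in $r$). I would set
\[
\cS := \{ w \in C(\T^n) : \psi \leq w \leq \varphi \ \text{ and } \ w \text{ is a viscosity subsolution to (E)}\}.
\]
Since $\psi$ is a subsolution with $\psi \leq \varphi$, we have $\psi \in \cS$, so $\cS \neq \emptyset$. I would then define the candidate
\[
u(x) := \sup_{w \in \cS} w(x), \qquad x \in \T^n,
\]
which satisfies $\psi \leq u \leq \varphi$, and hence is bounded by $M := \max\{\|\psi\|_{\Li}, \|\varphi\|_{\Li}\}$.

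The first genuine step is the a priori Lipschitz estimate. For any $w \in \cS$, the subsolution inequality $H(x,w,Dw)\le c$ combined with {\rm (H1)} yields $H(x,0,Dw) \le c + C_1\|w\|_{\Li} \le c + C_1 M$, and by the coercivity {\rm (H2a)} there is a radius $R = R(c,M,C_1)>0$, independent of $w$, such that $H(x,0,p) > c + C_1 M$ whenever $|p|>R$. I would make this rigorous at the level of viscosity solutions (rather than via the formal a.e.\ computation) to conclude $\Lip(w) \le R$ for every $w \in \cS$. Consequently $u$, being a pointwise supremum of a uniformly bounded and equi-Lipschitz family, is itself $R$-Lipschitz; in particular $u \in \Lip(\T^n)$ and $u$ is continuous. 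This continuity is exactly what removes the need for comparison: there is no gap between the upper and lower semicontinuous envelopes to close. A standard stability argument (the supremum of subsolutions is again a subsolution) then shows $u$ is a subsolution to {\rm (E)}, and since $\psi \leq u \leq \varphi$ we obtain $u \in \cS$, so $u$ is the maximal element of $\cS$.

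It remains to show that $u$ is a supersolution, which I would establish by the classical bump construction based on maximality. Suppose for contradiction that $u$ fails the supersolution property at some $x_0$: there is $\phi \in C^1$ touching $u$ from below at $x_0$ with $H(x_0,u(x_0),D\phi(x_0)) < c$. By continuity of $H$, for small $\del,\rho>0$ the function $\phi+\del$ satisfies $H(x,\phi(x)+\del,D\phi(x))<c$ on $B(x_0,\rho)$, with $\phi+\del>u$ near $x_0$ and $\phi+\del \le u$ near $\partial B(x_0,\rho)$; the glued function $\widetilde w := \max\{u,\phi+\del\}$ on $B(x_0,\rho)$ and $\widetilde w := u$ elsewhere is then a subsolution strictly larger than $u$ at $x_0$, contradicting maximality once I verify $\widetilde w \in \cS$. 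The main obstacle is precisely this last verification, and in particular the contact case $u(x_0)=\varphi(x_0)$: I would rule it out by noting that then $\phi$ also touches the supersolution $\varphi$ from below at $x_0$, so $H(x_0,\varphi(x_0),D\phi(x_0)) \ge c$, which contradicts $H(x_0,u(x_0),D\phi(x_0))<c$; hence $u(x_0)<\varphi(x_0)$ and, after shrinking $\del$, $\widetilde w \leq \varphi$ automatically (the lower bound $\widetilde w \geq \psi$ is immediate). This contradiction forces $u$ to be a supersolution, and therefore $u \in \Lip(\T^n)$ is the desired viscosity solution to {\rm (E)} with the prescribed $c$. Everything apart from this supersolution step is routine once the equi-Lipschitz estimate is in hand.
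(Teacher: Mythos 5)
Your proposal is correct and follows essentially the same route as the paper: Perron's method over the class of subsolutions squeezed between $\psi$ and $\varphi$, with the uniform Lipschitz bound from (H1)--(H2a) substituting for a comparison principle, and the supersolution step handled by the same two-case bump argument (ruling out the contact case $u(x_0)=\varphi(x_0)$ via the supersolution test for $\varphi$). The only cosmetic difference is that the paper builds the gradient bound $\|Dv\|_{L^\infty}\leq C_2$ into the definition of the admissible class, whereas you derive it a priori for every continuous subsolution in the class; both yield the same conclusion.
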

This result is not new in the literature, and is just a variant of the classical results in \cite{Is}.
What is different here is that under assumptions (H1) and (H2a), we obtain directly a Lipschitz viscosity solution $u$ with known Lipschitz constant,
which is not written down explicitly in \cite{Is}.
It is therefore of our interests to record it here.

Next is our second existence result for solutions to (E) without prior information about the constant $c$.

\begin{thm}\label{thm:exist2}
Assume {\rm (H1), (H2b)}.
Then, {\rm (E)} has a solution $(v,c) \in \Lip(\T^n) \times \R$.
\end{thm}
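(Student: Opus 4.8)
The plan is to reduce everything to Theorem \ref{thm:exist1}. Since {\rm (H2b)} implies {\rm (H2a)}, that theorem applies as soon as we can exhibit a single constant $c \in \R$ together with two ordered Lipschitz functions $\psi \le \varphi$ that are, respectively, a sub- and a supersolution of {\rm (E)} at that level. The entire content of the statement is therefore the \emph{determination of the ergodic constant} $c$: once $c$ is the correct ergodic value I expect to produce a genuine solution $v$ that serves simultaneously as $\psi$ and $\varphi$. The difficulty is that $H$ is not assumed monotone (nor convex) in $r$, so there is no comparison principle for {\rm (E)} and no shift-invariant ``$\inf$ over subsolutions'' formula for $c$; in particular the naive vanishing-discount scheme $\lambda v_\lambda + H(x,v_\lambda,Dv_\lambda)=0$ is of little use, since without monotonicity $\lambda v_\lambda$ need not stay bounded as $\lambda \to 0$.

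The key estimate --- and the reason {\rm (H2b)} is assumed rather than merely {\rm (H2a)} --- is a uniform Lipschitz bound that is \emph{independent of additive constants}. I would prove: there is $L_0 = L_0(C_1,H)$ such that every viscosity subsolution $\phi$ of $H(x,\phi,D\phi) = a$, with $a$ in a fixed bounded range, satisfies $\Lip(\phi) \le L_0$ after the normalization $\int_{\T^n} \phi \, dx = 0$. Indeed, from the subsolution inequality and {\rm (H1)} one gets $H(x,0,D\phi) \le a + C_1 \|\phi\|_{\Li}$, while {\rm (H2b)} provides, for every $\theta > 0$, a constant $C_\theta$ with $H(x,0,p) \ge \theta |p| - C_\theta$. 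Since the normalization gives $\|\phi\|_{\Li} \le \diam(\T^n)\,\Lip(\phi)$, choosing $\theta > C_1 \diam(\T^n)$ lets me absorb the $\|\phi\|_{\Li}$ term and obtain $(\theta - C_1\diam(\T^n))\,\Lip(\phi) \le a + C_\theta$. Coercivity alone {\rm (H2a)} would not permit this absorption; superlinearity is exactly what beats the Lipschitz-in-$r$ constant $C_1$.

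With this estimate in hand, the plan is to construct $c$ by a frozen-coefficient fixed point. For $w \in C(\T^n)$ let $H_w(x,p) := H(x,w(x),p)$, which is still superlinear with the same threshold data; the classical additive-eigenvalue ergodic problem $H_w(x,Dv) = c(w)$ then has a well-defined ergodic constant $c(w)$, which is bounded and satisfies $|c(w) - c(w')| \le C_1\|w-w'\|_{\Li}$. The estimate above shows that the normalized correctors are equi-Lipschitz, so the corrector operator maps the convex compact set $\cK := \{w \in C(\T^n) : \int_{\T^n} w \, dx = 0,\ \Lip(w) \le L_0\}$ into itself and into a precompact family. A fixed point $w$ of this operator satisfies $H(x,w,Dw) = c(w)$, so $(v,c) := (w, c(w))$ is the desired pair; since $v$ is then both a sub- and a supersolution at level $c$, Theorem \ref{thm:exist1} applies and confirms $v \in \Lip(\T^n)$.

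The hard part is the fixed-point step itself, precisely because monotonicity and convexity are absent. Freezing the $r$-slot restores solvability of each sub-problem (the frozen Hamiltonian has no $r$-dependence, so any positive discount, or a vanishing-viscosity term, enjoys a comparison principle), but without convexity the frozen corrector need not be unique modulo constants, so the corrector operator is a priori set-valued and its values need not be convex --- which blocks a direct appeal to Kakutani's theorem. I would resolve this by a secondary regularization that renders the frozen corrector unique and the operator single-valued and continuous --- e.g. a vanishing-viscosity problem $-\ep \Delta v + H_w(x,Dv) = c_\ep(w)$, solved and normalized by $\int_{\T^n} v \, dx = 0$, to which Schauder's theorem applies --- and then remove the regularization by sending $\ep \to 0$, the uniform estimate of the second paragraph (valid with constants independent of $\ep$) guaranteeing compactness and stability of the limit. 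Verifying that the single-valued selection is continuous in $w$ and that every bound stays independent of both the additive constant and the regularization parameter is the core technical obstacle; the remaining ingredients are standard ergodic-problem technology.
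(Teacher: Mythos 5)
Your overall architecture (freeze the $r$-slot, use superlinearity to beat the Lipschitz-in-$r$ constant $C_1$ and get compactness, then run a Schauder-type fixed point) is the right one, and your key Lipschitz estimate in the second paragraph is essentially the same mechanism the paper uses to build its invariant compact convex set. But the proposal has a genuine gap exactly where you flag it: the sub-problem you freeze is the \emph{ergodic} problem $H(x,w(x),Dv)=c(w)$, whose corrector is not unique modulo constants under (H1)--(H2b) alone, so your operator is set-valued with possibly non-convex values. The repair you sketch --- vanishing viscosity $-\ep\Delta v + H(x,w(x),Dv)=c_\ep(w)$ --- does not close the gap under the stated hypotheses: uniqueness (mod constants) of the viscous corrector requires applying the strong maximum principle to the difference of two solutions, which needs $p\mapsto H(x,r,p)$ to be at least locally Lipschitz, and $\ep$-uniform gradient bounds for the viscous problem typically need Bernstein-type arguments and hence more smoothness; none of this is available when $H$ is merely continuous in $(x,p)$. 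So the ``core technical obstacle'' you name is real and is left unresolved; as written the proof does not go through.

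The missing idea is to freeze the $r$-slot in a \emph{discounted} problem rather than an ergodic one, so that the sub-problem is uniquely and stably solvable by the comparison principle and the ergodic constant falls out of the normalization rather than having to be identified in advance. Concretely, the paper fixes $\lambda>C_1+1$ and, for each $u\in C(\T^n)$, solves
\[
\lambda v + H(x,u,Dv) - \lambda u = 0 \quad \text{in } \T^n,
\]
which has a unique Lipschitz solution $v$ (the $\lambda v$ term restores strict monotonicity), then sets $G(u)=v-\min_{\T^n}v$. The map $G$ is $4$-Lipschitz on $C(\T^n)$ by two applications of the comparison principle, and superlinearity (H2b) gives an invariant compact convex set exactly as in your estimate. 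Schauder then produces $u=G(u)$, and substituting $v=u+\min_{\T^n}v$ back into the equation yields $H(x,u,Du)=-\lambda\min_{\T^n}v=:c$. This sidesteps every question about uniqueness, convexity of the solution set, and continuity of a corrector selection --- the points at which your route stalls. If you want to salvage your scheme, replacing your frozen cell problem by this frozen discounted problem is the change to make.
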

As we do not assume the existence of a subsolution $\psi$ and a supersolution $\phi$ with $\psi\le\phi$ for some given $c\in \R$ as in Theorem \ref{thm:exist1}, the existence of solutions to (E) cannot be obtained by the standard Perron method. 
Existence result for (E) was obtained in \cite[Theorem 1.5]{WWY-0} under an additional assumption that $H$ is uniformly convex.
See also \cite{SWY}.
Unlike \cite{SWY, WWY-0}, we do not need any convexity of $H$ here, and we believe that Theorem \ref{thm:exist2} is new in the literature.

We emphasize here that, although the existence of $(v,c) \in C(\T^n) \times \R$, solution to (E), is guaranteed by  Theorems \ref{thm:exist1}--\ref{thm:exist2},
we do not have uniqueness of constant $c$ in general.
See Examples \ref{ex:2}, \ref{ex-4}, and \ref{ex-5} below.
Furthermore, for a fixed $c\in \R$ such that (E) has a solution, (E) might have multiple solutions as described explicitly in Section \ref{sec:ex} (Examples \ref{ex-4}--\ref{ex-7}). 
It is therefore extremely important to proceed further to understand this phenomenon and investigate how such nonuniqueness appears.
In particular, we aim to find a uniqueness set of (E), that is a set (hopefully the smallest) such that, if two solutions agree on it then they agree everywhere. Towards this goal, a  prototype class of Hamiltonian of the following form is studied carefully in Section \ref{sec:prototype} .
\begin{itemize}
\item[(H6)] Assume that
\[
H(x,r,p)=|p|^m - V(x) + f(r) \quad \text{ for } (x,r,p) \in \T^n \times \R \times \R^n.
\]
Here, $m \geq 1$ is a given number, and $V \in C(\T^n)$ is the potential energy with $\min_{\T^n} V=0$.
The function $f:\R \to \R$ is convex, and
\[
\begin{cases}
f(r)=0 \quad &\text{ for } r \leq 0,\\
f(r)>0 \quad &\text{ for } r>0.
\end{cases}
\]
\end{itemize}

\noindent Of course, we see that $f$ is not strictly increasing here, which makes the situation more interesting.
Here is our first result on the uniqueness property of (E) for the Hamiltonians in  the prototype class (H6) when $c>0$.

\begin{prop}\label{prop:unique-positive}
Assume {\rm (H6)}.
For $c>0$ fixed, {\rm (E)} admits a unique solution $(u_c,c) \in C(\T^n) \times (0,\infty)$.
\end{prop}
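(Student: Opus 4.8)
The plan is to treat existence and uniqueness separately, the genuine difficulty lying entirely in uniqueness: the nonlinearity $f$ is only nondecreasing and is in fact flat on $(-\infty,0]$, so the usual comparison principle for strictly monotone Hamiltonians does not apply directly. The decisive observation, which I would establish first, is that when $c>0$ every solution is \emph{strictly positive}; this pushes the problem into the region where $f$ is strictly increasing and thereby restores comparison.

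For existence, since $f$ need not be globally Lipschitz I would first truncate: fix $M$ large and set $f_M(r)=f(\min\{r,M\})$, which is convex and globally Lipschitz (its Lipschitz constant is the left derivative $f'_-(M)<\infty$), so that $\tilde H(x,r,p)=|p|^m-V(x)+f_M(r)$ satisfies (H1); moreover $\tilde H(x,0,p)=|p|^m-V(x)\to+\infty$ as $|p|\to\infty$, so (H2a) holds because $m\ge1$. To apply Theorem \ref{thm:exist1} I would produce ordered constant sub/supersolutions. The constant $\psi\equiv0$ is a subsolution since $\tilde H(x,0,0)=-V(x)\le0<c$. For the supersolution I would use that a convex $f$ with $f(0)=0$ and $f>0$ on $(0,\infty)$ is strictly increasing, hence tends to $+\infty$; thus I may pick $b>0$ with $f(b)\ge c+\max_{\T^n}V$ and, enlarging $M\ge b$, check that $\varphi\equiv b$ is a supersolution with $\psi\le\varphi$. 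Theorem \ref{thm:exist1} then yields a Lipschitz solution $u$ of $\tilde H=c$ with $0\le u\le b\le M$; since $f_M=f$ on $[0,M]$ and $u$ takes values there, $u$ solves the original (E).

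For uniqueness I would record two facts. First, convexity of $f$ together with $f(0)=0$ and positivity on $(0,\infty)$ forces $f$ to be strictly increasing on $[0,\infty)$: if $f(a)=f(b)$ for $0<a<b$, then convexity on $[0,b]$ gives $f(a)\le (a/b)f(b)<f(b)$, a contradiction. Second, any solution $u$ satisfies $\min_{\T^n}u>0$: evaluating the supersolution inequality with a constant test function at a minimum point $x_1$ gives $H(x_1,u(x_1),0)\ge c$, i.e. $f(\min u)\ge c+V(x_1)\ge c>0$, which is impossible unless $\min u>0$ because $f\equiv0$ on $(-\infty,0]$. This positivity is exactly where the hypothesis $c>0$ enters, and it is the main point of the proof.

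With these in hand, uniqueness follows from a standard doubling-of-variables comparison. Given two solutions $u,v$, I would assume $\max_{\T^n}(u-v)>0$, maximize $u(x)-v(y)-|x-y|^2/(2\ep)$ at $(x_\ep,y_\ep)$, and use $p_\ep=(x_\ep-y_\ep)/\ep$ in the sub/supersolution inequalities. Because the Hamiltonian depends on $p$ only through $|p|^m$ and both inequalities are tested with the same $p_\ep$, that term cancels and one is left with $f(u(x_\ep))-f(v(y_\ep))\le V(x_\ep)-V(y_\ep)$. Letting $\ep\to0$, the right-hand side tends to $0$ while $x_\ep,y_\ep$ converge to a maximum point $\bar x$ of $u-v$, so $f(u(\bar x))\le f(v(\bar x))$; but $u(\bar x)>v(\bar x)>0$ and $f$ is strictly increasing on $[0,\infty)$, a contradiction. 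Hence $u\le v$, and by symmetry $u=v$. The only real obstacle is the positivity step: without it the flat part of $f$ destroys strict monotonicity and the comparison argument breaks down — indeed uniqueness genuinely fails for $c\le0$ — so the crux is to show that $c>0$ confines all solutions to $(0,\infty)$.
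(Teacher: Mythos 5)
Your proof is correct and takes essentially the same approach as the paper's: both hinge on the viscosity supersolution test at a minimum point, which uses $c>0$ to force $f(\min u)\ge c>0$ and hence $\min u>0$, after which comparison is restored by the strict monotonicity of the convex $f$ on $(0,\infty)$ (the paper simply cites classical comparison theory on the range $[f^{-1}(c),C]$ where you write out the doubling-of-variables argument, and it treats existence more briefly by referring back to earlier sections). One harmless slip: your truncation $f_M(r)=f(\min\{r,M\})$ is globally Lipschitz but not convex; since only (H1) and (H2a) are needed to invoke Theorem \ref{thm:exist1}, nothing in the argument is affected.
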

Next, we consider the case that $c=0$.
By using Proposition \ref{prop:unique-positive}, a priori estimates, and Arzel\`a-Ascoli's theorem,
we can easily show that under (H6), (E) has a solution $(u,0) \in C(\T^n) \times \R$ (see the last part of the proof of Proposition \ref{prop:prot} for a proof of this fact).

As $\min_{\T^n} V =0$, denote by
\[
M_V = \left\{ x\in \T^n\,:\, V(x) = \min_{\T^n} V = 0\right\}.
\]
Here is our second result along this line.
\begin{prop}\label{prop:unique-0}
Assume {\rm (H6)}.
Let $c=0$. 
Then, $M_V$ is a uniqueness set for {\rm (E)}, that is, if $(u_1,0), (u_2,0)$ are two solutions to {\rm (E)}, and $u_1=u_2$ on $M_V$, then $u_1=u_2$.
\end{prop}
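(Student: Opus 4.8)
The plan is to prove one‑sided comparison and then symmetrize: it suffices to show that if $u_1=u_2$ on $M_V$ then $u_1\le u_2$, since exchanging the roles of $u_1,u_2$ gives the reverse inequality. Write $a:=V^{1/m}$, so that $a\ge0$ and $\{a=0\}=M_V$, and let $d_a$ denote the associated optic length (degenerate Finsler) distance $d_a(x,y)=\inf\{\int_0^1 a(\gamma)|\dot\gamma|\,ds : \gamma(0)=y,\ \gamma(1)=x,\ \gamma\in AC\}$. Suppose for contradiction that $\mu:=\max_{\T^n}(u_1-u_2)>0$, attained at some $x_0$. Since $u_1=u_2$ on $M_V$, we have $x_0\notin M_V$, hence $V(x_0)>0$.

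First I would extract sign information at $x_0$ by the usual doubling of variables. Maximizing $u_1(x)-u_2(y)-\frac{1}{2\ep}|x-y|^2$ and letting $\ep\to0$ yields $f(u_1(x_0))\le f(u_2(x_0))$, because the two copies of $|p_\ep|^m$ cancel and $V(x_\ep)-V(y_\ep)\to0$. Now $u_1(x_0)>u_2(x_0)$, and, since $f$ is convex with $f\equiv0$ on $(-\infty,0]$ and $f>0$ on $(0,\infty)$, $f$ is strictly increasing on $[0,\infty)$; therefore $u_1(x_0)>0$ would force $f(u_1(x_0))>f(u_2(x_0))$, a contradiction. Hence $u_1(x_0)\le0$, so $u_2(x_0)<u_1(x_0)\le0$; in particular $x_0$ lies in the open set $W:=\{u_2<0\}$.

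The heart of the argument is to sandwich the two solutions at $x_0$ by the eikonal value $\phi(x_0):=\inf_{y\in M_V}\big(u_2(y)+d_a(x_0,y)\big)$. For the upper bound I use that, since $f\ge0$, every subsolution of (E) satisfies $|Du_1|^m\le V-f(u_1)\le V$, i.e. $|Du_1|\le a$ in the viscosity sense; the standard identification of eikonal subsolutions with $d_a$‑Lipschitz functions then gives $u_1(x)-u_1(y)\le d_a(x,y)$ for all $x,y$. Restricting $y$ to $M_V$ and using $u_1=u_2$ there yields $u_1(x_0)\le\phi(x_0)$. For the lower bound I use that on $W$ one has $f(u_2)=0$, so $u_2$ is a viscosity supersolution of the eikonal equation $|Du_2|=a$ on $W$. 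Through $x_0$ there is then a backward calibrated (steepest‑descent) curve along which $u_2$ decreases exactly at the metric rate $a$; since $u_2$ only decreases along it, the curve stays in $W$, and because $u_2$ is bounded below it has finite $d_a$‑length and can terminate only where $a=0$, i.e. it converges to a point $y^\ast\in M_V$. Calibration gives $u_2(x_0)\ge u_2(y^\ast)+d_a(x_0,y^\ast)\ge\phi(x_0)$. Combining, $u_2(x_0)\ge\phi(x_0)\ge u_1(x_0)=u_2(x_0)+\mu$, so $\mu\le0$, the desired contradiction.

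The step I expect to be the main obstacle is the lower bound $u_2(x_0)\ge\phi(x_0)$, that is, the construction of the descent curve and the proof that it converges to $M_V=\{a=0\}$ rather than stalling at an interior point of $W$ or exiting through $\partial W=\{u_2=0\}$. Exit through $\{u_2=0\}$ is painless, since there $u_2$ would have risen to $0>u_2(x_0)$, which is impossible along a descent curve; but convergence to the degenerate set $M_V$ is exactly the weak KAM / Aubry‑set phenomenon and must be justified carefully, using coercivity of the eikonal Hamiltonian away from $M_V$ together with the a priori bound $\min_{\T^n}u_2\le u_2(\cdot)$. Once this is in hand, the convexity in $(r,p)$ from (H6) enters only through the strict monotonicity of $f$ on $\{r>0\}$ and the exact eikonal structure on $\{r\le0\}$, so no uniform convexity in $p$ is needed.
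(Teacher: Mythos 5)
Your route is genuinely different from the paper's, and it is worth comparing the two. The paper avoids all metric/weak-KAM machinery by a convexity trick: since $f(0)=0$ and $V\ge 0$, the constant $0$ is a subsolution of (E) with $c=0$; by convexity of $(r,p)\mapsto |p|^m+f(r)$, the combination $v=\lam u_1=(1-\lam)\cdot 0+\lam u_1$ is a subsolution of $|Dv|^m+f(v)\le \lam V$. Because $u_1=u_2$ on $M_V$, for $\lam<1$ close to $1$ the maximum of $\lam u_1-u_2$ is still positive and attained at some $x_\lam\notin M_V$; doubling variables there and using that $f$ is nondecreasing yields $(1-\lam)V(x_\lam)\le 0$, contradicting $V(x_\lam)>0$. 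This is a few lines and uses only the comparison between a subsolution of the $\lam V$-equation and a supersolution of the $V$-equation. Your first two steps (the sign information $u_2(x_0)<u_1(x_0)\le 0$ at the maximum point, and the upper bound $u_1(x_0)\le \phi(x_0)$ via the $d_a$-Lipschitz characterization of subsolutions of $|Du|\le a$) are correct and cleanly argued.

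The genuine gap is exactly where you flagged it: the lower bound $u_2(x_0)\ge\phi(x_0)$. For a merely continuous viscosity supersolution of the degenerate eikonal equation $|Du_2|=a$ on the open set $W=\{u_2<0\}$, the existence of a backward calibrated curve that terminates on $\{a=0\}=M_V$ is not a soft fact; it is essentially the content of the Namah--Roquejoffre / Fathi--Siconolfi analysis of the Aubry set, and your sketch asserts it rather than proves it. Note in particular that the naive local inequality $u_2(x)\ge\min_{y\in\partial B(x,r)}\bigl(u_2(y)+d_a(x,y)\bigr)$, which drives the construction, is \emph{false} on balls meeting $\{a=0\}$ (on $\{a\equiv 0\}$ every continuous function is a supersolution of $|Du|\ge 0$, yet can have a strict interior minimum), so the chain of near-calibrated points must be built using balls kept away from $M_V\cup\partial W$, with radii chosen so that one can rule out the chain accumulating at an interior point of $W\setminus M_V$ (using $\sum_k r_k\min_{\ol B(y_k,r_k)}a\le u_2(x_0)-\min_{\T^n}u_2<\infty$) and then pass to the limit onto $M_V$. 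This can all be done, so your strategy is viable, but as written the proof is incomplete precisely at its crux; the paper's convex-combination argument sidesteps the issue entirely.
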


\noindent It is worth noting that Proposition \ref{prop:unique-0} was first obtained in \cite{NR} when $f \equiv 0$.

\medskip

The uniqueness structure of solutions to (E), with Hamiltonians beyond the class of (H6), is more involved. To study it in a systematic way, we apply the nonlinear adjoint method and develop further the ideas in \cite{MT-P}. This is done in  Section \ref{sec:structure}.
We refer the readers to \cite{Ev1, T1, CGMT, MT-A, MT-P, LMT} and the references therein for the developments of the nonlinear adjoint method.

Under assumptions (H1), (H2b), (E) admits a solution $(v,c) \in C(\T^n) \times \R$.
As noted above, the constant $c$, that is the right hand side of (E), is not unique in general.
Therefore, to discuss the uniqueness structure of (E), we fix a $c\in \R$ such that (E) has a solution $v\in C(\T^n)$.
By a further normalization (setting $\tilde H(x,r,p) = H(x,r,p)-c$ for $(x,r,p) \in \T^n \times \R \times \R^n$), we may assume that $c=0$.
We hence study the uniqueness structure for the following problem
\begin{equation}\label{E-0}
H(x,u,Du)=0 \quad \text{ in } \T^n.
\end{equation}
Our main result on the uniqueness structure of \eqref{E-0} is as follows.

\begin{thm}\label{thm:unique}
Assume {\rm (H1), (H2b), (H3), (H4), (H5)}. Let $\cM$ be the set of measures in Definition \ref{def:M} of Section \ref{sec:structure}. Then, for any two solutions $u_1, u_2$ to \eqref{E-0}, the condition
\[
\int_{\T^n} u_1(x) \, d\nu(x) \leq \int_{\T^n} u_2(x)\, d\nu(x) \quad \text{ for all } \nu \in \cM
\]
implies $u_1 \leq u_2$. In particular, $M := \overline{ \bigcup_{\nu \in \cM} {\rm supp} (\nu)}$ is a uniqueness set for \eqref{E-0}.
\end{thm}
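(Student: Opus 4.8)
The plan is to reduce the statement to the pointwise comparison $u_1\le u_2$ on all of $\T^n$ and to establish it through the nonlinear adjoint method, in the spirit of \cite{MT-P}. The starting observation is that, by the convexity assumption (H5), the difference $w:=u_1-u_2$ of two solutions of \eqref{E-0} formally satisfies a linear equation of transport type. Indeed, writing $0=H(x,u_1,Du_1)-H(x,u_2,Du_2)$ and linearizing by convexity, one finds that $w$ is a subsolution of
\[
b(x)\cdot Dw + c(x)\,w \le 0 \quad \text{in } \T^n,
\]
where $b(x)=\int_0^1 D_pH(x,u_2+\theta w,Du_2+\theta Dw)\,d\theta$ and $c(x)=\int_0^1 D_rH(\cdots)\,d\theta\ge 0$, the sign of $c$ being exactly (H4). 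The whole difficulty is that $b,c$ and $w$ are at best bounded measurable, so this inequality only makes sense after regularization; the role of the adjoint method is precisely to give it rigorous meaning and to produce the measures in $\cM$ as the natural test objects.

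Concretely, I would first regularize \eqref{E-0} by vanishing viscosity, solving $H(x,u^\ep,Du^\ep)=\ep\,\Del u^\ep$ in $\T^n$ for a smooth $u^\ep$; existence follows by the continuity method with a priori bounds from (H1), (H2b), and the comparison principle under (H4). The key uniform estimate is the Lipschitz bound $\|Du^\ep\|_{L^\infty}\le C$ independent of $\ep$: differentiating the equation, pairing with the adjoint measure, and observing that the quantity $\tfrac12 H^2+D_xH\cdot p$ appearing in (H3) is exactly what controls the gradient, so that (H3) keeps $|Du^\ep|$ bounded; with Arzel\`a--Ascoli, $u^\ep\to\bar u$, a solution of \eqref{E-0}. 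At the linearized level I would then introduce the adjoint equation
\[
-\ep\,\Del\sigma^\ep-\Div\!\big(D_pH(x,u^\ep,Du^\ep)\,\sigma^\ep\big)+D_rH(x,u^\ep,Du^\ep)\,\sigma^\ep=\delta_{x_0},
\]
whose solution $\sigma^\ep$ is a \emph{nonnegative} measure because $D_rH\ge 0$ by (H4). The plan is to test the linearized sub/supersolution inequalities for $u_1$ and $u_2$ against this \emph{common} measure $\sigma^\ep$; convexity (H5) guarantees that appropriate smoothings of $u_1$ and $u_2$ give the one-sided inequalities with the correct sign, and integration by parts transfers all derivatives onto $\sigma^\ep$, extracting the pointwise values $u_1(x_0)$ and $u_2(x_0)$.

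Next I would pass to the limit $\ep\to0$. Using the Lipschitz bound from (H3) together with the superlinearity (H2b) to get tightness and equi-integrability of $D_pH(x,u^\ep,Du^\ep)\,\sigma^\ep$, I would extract a weak-$*$ limit $\sigma^\ep\rightharpoonup\nu$, where $\nu$ is precisely an element of the class $\cM$ of Definition \ref{def:M}; the damping term $D_rH\ge 0$ forces $\nu$ to concentrate on the set where the monotonicity in $r$ degenerates, which is why $\cM$, and hence its supports, is genuinely smaller than $\T^n$ and non-uniqueness may persist away from $M$. In the limit the pairing yields a comparison of the form $u_1(x_0)-u_2(x_0)\le \int_{\T^n}(u_1-u_2)\,d\nu$ with $\nu\in\cM$, and the hypothesis $\int_{\T^n}u_1\,d\nu\le\int_{\T^n}u_2\,d\nu$ closes it to give $u_1(x_0)\le u_2(x_0)$. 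Since $x_0$ is arbitrary, $u_1\le u_2$. The uniqueness-set conclusion is then immediate: if $u_1=u_2$ on $M=\overline{\bigcup_{\nu\in\cM}\supp(\nu)}$, then $\int u_1\,d\nu=\int u_2\,d\nu$ for every $\nu\in\cM$, so applying the implication in both directions gives $u_1=u_2$.

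The main obstacle I anticipate is the limit passage and the correct identification of $\nu$. Two points are delicate. First, reconciling the viscosity inequalities for the merely Lipschitz solutions $u_1,u_2$ with the smooth adjoint framework requires a double regularization (for instance sup-/inf-convolutions alongside the viscous parameter $\ep$) and careful bookkeeping of the resulting error terms, which must be shown to vanish using the gradient bounds of (H3) and the superlinear growth (H2b) to prevent loss of mass of $\sigma^\ep$ as $|p|\to\infty$. Second, and most importantly, one must extract a genuinely one-sided inequality against $\nu$ rather than an exact identity at $x_0$: the concentration of $\sigma^\ep$ driven by the killing term $D_rH$ is what defeats a naive representation formula and simultaneously pins down the support of the limit measures, and arranging this mechanism to produce exactly the class $\cM$, and no larger set, is the crux of the argument.
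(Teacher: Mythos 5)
Your overall strategy --- linearize the difference of two solutions via convexity (H5), pair the resulting one-sided inequality with a nonnegative adjoint measure, pass to the limit to get $(u_1-u_2)(x_0)\le\int_{\T^n}(u_1-u_2)\,d\nu$ for some $\nu\in\cM$, and then invoke the hypothesis --- is exactly the paper's strategy, and your closing steps (including the deduction of the uniqueness-set statement) are correct. However, your stationary implementation has two genuine gaps. First, the stationary adjoint equation
\[
-\ep\,\Del\sig^\ep-\Div\bigl(D_pH(x,u^\ep,Du^\ep)\,\sig^\ep\bigr)+H_r(x,u^\ep,Du^\ep)\,\sig^\ep=\del_{x_0}
\]
is in general \emph{not solvable} on $\T^n$ under (H4) alone: integrating over the torus kills the divergence and Laplacian terms and forces $\int_{\T^n}H_r\,\sig^\ep\,dx=1$, which is impossible when $H_r$ vanishes identically and breaks down precisely in the degenerate regime this theorem targets. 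The paper avoids this by regularizing with the Cauchy problem \eqref{eq:C1} and using the \emph{backward parabolic} adjoint equation \eqref{eq:sig} with terminal Dirac data, which is always solvable and whose mass is controlled by Proposition \ref{prop:sig}; the measures $\nu^\ep$ are then obtained by integrating $\sig^\ep$ over $t\in[0,1]$, and $\cM$ in Definition \ref{def:M} consists exactly of subsequential limits of these time-integrated measures, so the theorem as stated must be proved with them.

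Second, your approximation $H(x,u^\ep,Du^\ep)=\ep\,\Del u^\ep$ produces, after extraction, \emph{some} solution $\bar u$ of \eqref{E-0}; since solutions are nonunique (the entire point of the paper), there is no mechanism forcing $\bar u=u_2$ or $\bar u=u_1$, so you cannot linearize around a regularization of the \emph{given} $u_2$. (Well-posedness of that stationary viscous problem is itself unclear without strict monotonicity in $r$.) The paper resolves this by letting $w_i^\ep$ solve \eqref{eq:C1} with initial data $u_i^{\ep^4}$, proving the quantitative convergence $\|w_i^\ep-u_i\|_{L^\infty(\T^n\times[0,1])}\le C\ep$ (Lemma \ref{lem:ep}), linearizing the adjoint equation about $w_2^\ep$, and integrating $\frac{d}{dt}\int_{\T^n}(w_1^\ep-w_2^\ep)\sig^\ep\,dx\le0$ in time to reach $x_0$. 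Your heuristic that the killing term forces the limit measures to concentrate where the $r$-monotonicity degenerates is consistent with Propositions \ref{prop:st-inc} and \ref{prop:prot}, but it is not needed for, and does not substitute for, these two missing ingredients.
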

As described in Definition \ref{def:M}, $\cM$ contains adjoint measures associated to solutions of \eqref{E-0}. The whole construction of these measures is done in Section \ref{sec:structure}.
Theorem \ref{thm:unique} is a generalized version of \cite[Theorem 1.1]{MT-P}. 
See also a related work \cite{T}. 
To the best of our knowledge, Theorem \ref{thm:unique} is new in the literature.
In two specific situations (see Subsections \ref{subsec:structure1}--\ref{subsec:structure2}), we have clear understanding about this $\cM$.
In particular, we find a natural link between Theorem \ref{thm:unique} and Proposition \ref{prop:unique-0} above (see Proposition \ref{prop:prot}).

\subsection*{Organization of the paper}
The paper is organized as following.
In Section \ref{sec:exist}, we give the proofs of Theorems \ref{thm:exist1}--\ref{thm:exist2}.
Besides, we give three examples of Hamiltonians satisfying requirements of Theorem \ref{thm:exist1} in Subsection \ref{subsec:exist}.
In Section \ref{sec:ex}, we give various new examples to discuss nonuniqueness issues of solutions to (E).
Then, Section \ref{sec:prototype} is devoted to further analysis on a uniqueness set for a prototype case that was discussed in Section \ref{sec:ex}.
Finally, in Section \ref{sec:structure}, we use the nonlinear adjoint method to study systematically the uniqueness structure of solutions to (E).
Various connections with classical results and with the prototype case in Section \ref{sec:prototype} are discussed in deep too.


\section{Existence results for solutions to (E)} \label{sec:exist}
\subsection{Proof of Theorem \ref{thm:exist1}}
\begin{proof}[Proof of Theorem \ref{thm:exist1}]
The main idea is to use the Perron method to get the existence result.

Let $M= \|\psi\|_{L^\infty(\T^n)} + \|\varphi\|_{L^\infty(\T^n)} +1$.
By assumptions (H1) and (H2a), there exists $C_2>0$ such that
\[
H(x,r,p) \leq c \quad \text{ for some } (x,r) \in \T^n \times [-M, M] \quad \Longrightarrow \quad |p| \leq C_2.
\] 
Define, for $x\in \T^n$,
\begin{multline*}
u(x) = \sup \Big\{ v(x)\,:\, \psi \leq v \leq \varphi, \, \|Dv\|_{L^\infty(\T^n)} \leq C_2,\\
\text{and $v \in \Lip(\T^n)$ is a viscosity subsolution to (E)} \Big\}.
\end{multline*}
Of course, $u$   is well-defined as $\psi$ itself is an admissible subsolution in the above formula.
Furthermore, it is clear that $u$ is Lipschitz in $\T^n$, and $\|Du\|_{L^\infty(\T^n)} \leq C_2$.
By the stability of viscosity subsolutions, we have that $u$ is a viscosity subsolution to (E).

Hence, we only need to show that $u$ is a viscosity supersolution to (E).
Assume by contradiction that this is not the case. 
Then, there exist a smooth test function $\phi \in C^\infty(\T^n)$ and a point $x_0 \in \T^n$ such that
\[
\begin{cases}
u(x_0) = \phi(x_0),\
u(x) >\phi(x) \quad \text{ for all } x \in \T^n \setminus \{ x_0\},\\
H(x_0,u(x_0),D\phi(x_0)) = H(x_0,\phi(x_0),D\phi(x_0)) <c.
\end{cases}
\]
There are two cases to be considered here.
The first case is when $u(x_0)=\varphi(x_0)$. 
This means that $\phi$ touches  $\varphi$ from below at $x_0$.
By the definition of viscosity supersolutions, 
\[
H(x_0,\phi(x_0),D\phi(x_0))  \geq c,
\]
which implies a contradiction immediately.

The second case is when $u(x_0) <\varphi(x_0)$.
There exist $r,\ep>0$ sufficiently small such that
\[
\begin{cases}
u(x) < \varphi(x) -\ep \quad &\text{ for all } x \in B(x_0,r),\\
\phi(x) < u(x) -\ep \quad &\text{ for all } x\in \partial B(x_0,r),\\
H(x,\phi(x), D\phi(x)) < c - C_1 \ep \quad &\text{ for all } x \in B(x_0,r),\\
|D\phi(x)| \leq C_2  \quad &\text{ for all } x \in B(x_0,r).
\end{cases}
\]
Now, set
\[
\ol{u}(x) = 
\begin{cases}
\max\{u(x),\phi(x)+\ep\} \quad &\text{ for all } x\in B(x_0,r),\\
u(x) \quad &\text{ for all } x \in \T^n \setminus B(x_0,r).
\end{cases}
\]
It is quite clear that $\ol{u}$ is a viscosity subsolution to (E) thanks to (H1), and $\|D\ol{u}\|_{L^\infty(\T^n)} \leq C_2$. 
This again leads to a contradiction.
The proof is complete.

\end{proof}

\subsection{Examples of Hamiltonians satisfying Theorem \ref{thm:exist1}} \label{subsec:exist}

\begin{ex}[Classical setting with no $r$-dependence] \label{ex:1}
{\rm 
If $H(x,r,p)= \tilde H(x,p)$ with $\tilde H$ satisfies {\rm (H2a)}, that is,
\[
\lim_{|p| \to \infty}  \tilde H(x,p) = +\infty \quad \text{ uniformly for } x\in \T^n.
\]
Then we use classical results (see \cite{LPV} for example) to have the existence of a unique constant $c\in \R$ such that
\[
\tilde H(x,Dv) = c \quad \text{ in } \T^n
\]
has a viscosity solution $v\in C(\T^n)$.
Then, we can simply choose $\psi=\varphi=v$.
In this example, $c$ is unique.
}
\end{ex}

\begin{ex}[Strictly monotone setting] \label{ex:2}
{\rm
Assume {\rm (H1), (H2a)}.
If there exists $\al>0$ such that
\[
H_r(x,r,p) \geq \al \quad \text{ for all } (x,r,p) \in \T^n \times \R \times \R^n,
\]
then, for each fixed $c \in \R$, we can choose
\[
 \psi \equiv -\frac{1}{\al}\left(\|H(\cdot,0,0)\|_{L^\infty(\T^n)} +|c|\right), \quad \text{and} \quad \varphi \equiv \frac{1}{\al}\left(\|H(\cdot,0,0)\|_{L^\infty(\T^n)} +|c|\right).
\]
Therefore, (E) has solutions for each $c \in \R$. This is consistent with classical results (see \cite{CEL, CL} for example).
}
\end{ex}

\begin{ex}[Non-monotone setting] \label{ex:3}
{\rm
Assume {\rm (H1), (H2b)}.
Let us assume further that
\begin{equation}\label{nm-condition1}
\begin{cases}
\max_{x\in \T^n} H(x,0,0) =  H(x_0,0,0) \quad &\text{ for some } x_0 \in \T^n,\\
H(x_0,0,0) \leq H(x_0,0,p) \quad &\text{ for all } p \in \R^n.
\end{cases}
\end{equation}
Note that the requirements of this example are stronger than those in Theorem \ref{thm:exist2} (as we only assume (H1) and (H2b) there).
Nevertheless, this is a direct application of Theorem \ref{thm:exist1}, and hence, it is worth pointing it out here.
Examples of Hamiltonians satisfying (H1), (H2b), and \eqref{nm-condition1} are many.
A typical one is $H(x,r,p) = |p|^m + f(r) + V(x)$ for $(x,r,p) \in \T^n \times \R \times \R^n$.
Here, $m>1$ is fixed, $V\in C(\T^n)$, and $ f \in \Lip(\R)$ with Lipschitz constant at most $C_1$.
Of course, there is no requirement on convexity of $H$ in $p$ here.

\medskip

In this setting, we choose first
\[
c=\max_{x\in \T^n} H(x,0,0) =  H(x_0,0,0), \quad \text{and} \quad \psi \equiv 0.
\]
We now construct $\varphi$. Let $Q(x_0)= x_0 + [-1/2,1/2]^n$ be the unit cube centered at $x_0$. For $s>0$ sufficiently large, set
\[
\varphi(x) = s |x-x_0| \quad \text{ for all } x\in Q(x_0),
\]
and extend $\varphi$ to $\R^n$ periodically.
We claim that $\varphi$ is a supersolution to (E).

It is clear that $\varphi$ is not differentiable at $x_0$ and $\partial Q(x_0)$.
We do not have to worry about $\partial Q(x_0)$ as for any $x \in \partial Q(x_0)$, $D^- \varphi(x) = \emptyset$.
At $x=x_0$, we have $D^-\varphi(x_0) = \ol{B}(0,s)$. By the second line in assumption \eqref{nm-condition1}, we have that
\[
H(x_0,0,p) \geq H(x_0,0,0) = c \quad \text{ for all } p \in \ol{B}(0,s).
\]
For other values of $x$, $\varphi$ is smooth and $|D\varphi(x)|=s$. We use (H1) and (H2b), the superlinearity of $H$, to get
\[
H(x,\varphi(x),D\varphi(x)) \geq H(x,0,D\varphi(x)) - C_1s \geq c,
\]
for $s$ sufficiently large.}
\end{ex}

\subsection{Proof of Theorem \ref{thm:exist2}}
We always assume (H1), (H2b) in this subsection.
We first formulate Theorem \ref{thm:exist2} as a fixed point problem by adding a monotone term to (E).
 
 Fix $\lambda > C_1+1$. 
 For each $u \in C(\T^n)$, let $v \in \Lip(\T^n)$  be the unique viscosity solution to
\begin{equation}\label{eq:u-v}
\lambda v + H(x,u,Dv) - \lambda u = 0 \quad \text{ in } \T^n.
\end{equation}
Note that we use the Perron method to get directly a solution $v \in \Lip(\T^n)$.
Then, uniqueness of \eqref{eq:u-v} follows immediately.

Denote by $G(u) = w := v - \min_{\mathbb{T}^n} v$. It is clear that $w=G(u)$ solves
\begin{equation}\label{eq:G}
\lambda (w-u) + H(x,u,Dw) = -\lambda \min_{\mathbb{T}^n} v \quad \text{ in } \T^n.
\end{equation}
Our aim now is to show that the map $G: C(\T^n) \to C(\T^n)$ has a fixed point by using the Schauder fixed point theorem. We first show that $G$ is continuous.

\begin{lem}\label{lem:G1}
For every $u_1, u_2 \in C(\T^n)$,
\[
\|G(u_1) - G(u_2)\|_{L^\infty(\T^n)} \leq 4 \|u_1-u_2\|_{L^\infty(\T^n)}.
\]
\end{lem}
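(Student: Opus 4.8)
The plan is to first prove the stronger bound $\|v_1-v_2\|_{L^\infty(\T^n)} \le 2\|u_1-u_2\|_{L^\infty(\T^n)}$ at the level of the solutions of \eqref{eq:u-v}, and then transfer it to $G$ at the cost of one further factor of $2$. Denote by $v_i \in \Lip(\T^n)$ the unique viscosity solution of \eqref{eq:u-v} with data $u_i$ ($i=1,2$), and set $\delta := \|u_1-u_2\|_{L^\infty(\T^n)}$, so that $u_2-\delta \le u_1 \le u_2+\delta$ on $\T^n$.

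The key step is to check that the constant shift $v_2+2\delta$ is a viscosity supersolution of the equation solved by $v_1$, namely $\lambda v + H(x,u_1,Dv) - \lambda u_1 = 0$. If $\phi$ is a smooth function touching $v_2+2\delta$ from below at a point $x_0$, then $\phi - 2\delta$ touches $v_2$ from below there, so the supersolution property of $v_2$ for its own equation gives $\lambda v_2(x_0) + H(x_0,u_2(x_0),D\phi(x_0)) - \lambda u_2(x_0) \ge 0$. Using (H1) to pass from $u_2$ to $u_1$ inside $H$ (at the cost of $-C_1\delta$) together with $-\lambda u_1(x_0) \ge -\lambda u_2(x_0) - \lambda\delta$, one obtains
\[
\lambda\phi(x_0) + H(x_0,u_1(x_0),D\phi(x_0)) - \lambda u_1(x_0) \ge (\lambda - C_1)\delta \ge 0,
\]
where the last inequality is where $\lambda > C_1+1$ is used. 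Hence $v_2+2\delta$ is a supersolution of the $v_1$-equation. The comparison principle for \eqref{eq:u-v} — valid precisely because of the strictly monotone term $\lambda v$, the very feature behind the uniqueness invoked above — then yields $v_1 \le v_2 + 2\delta$. Exchanging the roles of $u_1$ and $u_2$ gives the symmetric bound, and therefore $\|v_1-v_2\|_{L^\infty(\T^n)} \le 2\delta$.

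It then remains to pass from $v$ to $w = G(u) = v - \min_{\T^n} v$. I would write $G(u_1) - G(u_2) = (v_1-v_2) - (\min_{\T^n} v_1 - \min_{\T^n} v_2)$ and use $|\min_{\T^n} v_1 - \min_{\T^n} v_2| \le \|v_1-v_2\|_{L^\infty(\T^n)}$, so that
\[
\|G(u_1)-G(u_2)\|_{L^\infty(\T^n)} \le 2\|v_1-v_2\|_{L^\infty(\T^n)} \le 4\delta,
\]
which is the asserted estimate.

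I expect the only delicate point to be the viscosity-sense verification that $v_2+2\delta$ is a supersolution and the applicability of the comparison principle; both are standard on the compact torus given the continuity and coercivity of $H$ and the strict monotonicity supplied by $\lambda$. The shift $2\delta$ is slightly larger than strictly necessary — any $a \ge 1 + C_1/\lambda$ works, and $1 + C_1/\lambda < 2$ since $\lambda > C_1+1$ — but it keeps the constants clean and yields the stated factor $4$.
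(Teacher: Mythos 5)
Your proposal is correct and follows essentially the same route as the paper: both arguments use (H1) to show that a constant shift of one solution of \eqref{eq:u-v} is a super-/sub-solution of the other's equation, invoke the comparison principle (available because of the strictly monotone $\lambda v$ term) to get $\|v_1-v_2\|_{L^\infty(\T^n)}\le 2\|u_1-u_2\|_{L^\infty(\T^n)}$, and then lose one more factor of $2$ when subtracting the minima to pass to $G$. The only cosmetic differences are that the paper phrases the first step as ``$v_1$ is a subsolution of the $u_2$-equation up to an error $(\lambda+C_1)\|u_1-u_2\|_{L^\infty(\T^n)}$'' rather than shifting $v_2$, and handles the minima by an explicit pointwise computation rather than the inequality $|\min v_1-\min v_2|\le\|v_1-v_2\|_{L^\infty(\T^n)}$.
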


\begin{proof}
For $i=1,2$, let $v_i \in C(\T^n)$ be the unique viscosity solution to
\begin{equation*}
\lambda v_i + H(x,u_i,Dv_i) - \lambda u_i = 0  \quad \text{ in } \T^n.
\end{equation*}
We use (H1) to deduce that $v_1$ is a subsolution to
\[
\lam v_1 + H(x,u_2, Dv_1) - \lam u_2 \leq ( \lam + C_1) \|u_1-u_2\|_{L^\infty(\T^n)} \quad \text{ in } \T^n.
\]
By the comparison principle, we yield
\[
v_1 - \left(1 + \frac{C_1}{\lam} \right) \|u_1-u_2\|_{L^\infty(\T^n)} \leq v_2.
\]
By the same argument, we obtain
\[
\|v_1-v_2\|_{L^\infty(\T^n)} \leq  \left(1 + \frac{C_1}{\lam} \right) \|u_1-u_2\|_{L^\infty(\T^n)} \leq 2  \|u_1-u_2\|_{L^\infty(\T^n)}.
\]
Next, for $i=1,2$, denote by 
\[
w_i = G(u_i) = v_i - \min_{\T^n} v_i = v_i - v_i(x_i) \quad \text{ for some } x_i \in \T^n.
\]
Then, for any $x\in \T^n$,
\begin{align*}
w_1(x) - w_2(x) &= (v_1(x) - v_1(x_1)) - (v_2(x) - v_2(x_2))\\
&= (v_1(x) - v_2(x)) + (v_2(x_2) - v_1(x_1)) \\
&=  (v_1(x) - v_2(x)) + (\min_{\T^n} v_2 - v_1(x_1))\\
&\leq (v_1(x) - v_2(x)) + (v_2(x_1) - v_1(x_1)) \leq 2 \|v_1-v_2\|_{L^\infty(\T^n)} \\
&\leq 4\|u_1-u_2\|_{L^\infty(\T^n)}.
\end{align*}
By a symmetric argument, the proof is complete.
\end{proof}

Set $C_0 = \max_{x\in \T^n} |H(x,0,0)|$.
By (H2b), we pick $\al>0$ such that, if $|p| \geq \al$, then
\[
H(x,0,p) \geq 3\lam (C_0 + \al (1+\sqrt{n})).
\]
Denote by
\[
K := \{u \in \Lip(\mathbb{T}^n) \,:\, u \geq 0, \|u\|_{L^\infty(\T^n)} +\|Du\|_{L^\infty(\T^n)}  \le \al (1+\sqrt{n})\}.
\]
Clearly, $K$ is a non-empty convex and compact subset of $C(\T^n)$.

\begin{lem}\label{lem:G2}
We have that $G(K) \subset K$, where $G(K):=\{G(v)\,:\, v\in K\}$. 
\end{lem}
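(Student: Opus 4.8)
The plan is to check the two defining properties of $K$ for $w = G(u) = v - \min_{\T^n} v$, where $u \in K$ and $v \in \Lip(\T^n)$ is the unique solution to \eqref{eq:u-v}. Nonnegativity, $w \geq 0$, is immediate from the definition of $w$. Since $Dw = Dv$ and $\|w\|_{L^\infty(\T^n)} = \max_{\T^n} v - \min_{\T^n} v = \mathrm{osc}(v)$, the whole lemma reduces to the single estimate $\mathrm{osc}(v) + \|Dv\|_{L^\infty(\T^n)} \leq \al(1+\sqrt n)$, and the key ingredient will be the gradient bound $\|Dv\|_{L^\infty(\T^n)} \leq \al$.

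First I would extract crude a priori bounds on $\max_{\T^n} v$ and $\min_{\T^n} v$ by testing the equation at the extrema of $v$ with constant test functions. At a maximum point $x_M$, the subsolution inequality with slope $p=0$ reads $\lam v(x_M) + H(x_M, u(x_M), 0) \leq \lam u(x_M)$; since $0 \leq u \leq \al(1+\sqrt n)$ on $K$ and $|H(x, u(x), 0)| \leq C_0 + C_1\al(1+\sqrt n)$ by the definition of $C_0$ together with (H1), this controls $\max_{\T^n} v$ from above. Symmetrically, the supersolution inequality at a minimum point $x_m$ gives $\lam v(x_m) \geq -\big(C_0 + C_1\al(1+\sqrt n)\big)$, that is, a lower bound $\min_{\T^n} v \geq -\frac{1}{\lam}\big(C_0 + C_1\al(1+\sqrt n)\big)$.

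The heart of the argument is the gradient bound. For any $x_0 \in \T^n$ and any $p \in D^+ v(x_0)$, the subsolution property yields $H(x_0, u(x_0), p) \leq \lam\big(u(x_0) - v(x_0)\big)$. Using $u(x_0) \leq \al(1+\sqrt n)$, the lower bound on $v$ from the previous step, and (H1) to pass from $H(x_0, u(x_0), p)$ to $H(x_0, 0, p)$ at the cost of $C_1\al(1+\sqrt n)$, I expect to reach $H(x_0, 0, p) \leq \lam\al(1+\sqrt n) + C_0 + 2C_1\al(1+\sqrt n)$. Here the choice $\lam > C_1 + 1$ is decisive: it forces the right-hand side to be strictly smaller than $3\lam\big(C_0 + \al(1+\sqrt n)\big)$, which is exactly the threshold in the definition of $\al$. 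The contrapositive of that defining property then gives $|p| < \al$; as this holds at every point of differentiability of the Lipschitz function $v$, we conclude $\|Dv\|_{L^\infty(\T^n)} \leq \al$.

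Finally I would assemble the pieces. With $\|Dv\|_{L^\infty(\T^n)} \leq \al$ and $\diam(\T^n) \leq \sqrt n$, the oscillation obeys $\mathrm{osc}(v) \leq \sqrt n\,\al$, so that $\|w\|_{L^\infty(\T^n)} + \|Dw\|_{L^\infty(\T^n)} = \mathrm{osc}(v) + \|Dv\|_{L^\infty(\T^n)} \leq \sqrt n\,\al + \al = \al(1+\sqrt n)$, which together with $w \geq 0$ gives $w \in K$. The only delicate point I anticipate is the bookkeeping of constants ensuring that the strict inequality at the threshold $3\lam\big(C_0 + \al(1+\sqrt n)\big)$ is genuinely available: the gap $\lam > C_1 + 1$ and the factor $3$ in the threshold supply precisely the slack needed to absorb both the $C_0$ term and the $2C_1\al(1+\sqrt n)$ contribution, so that the defining property of $\al$ can be invoked.
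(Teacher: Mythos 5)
Your proposal is correct and follows essentially the same route as the paper's proof: crude $L^\infty$ bounds on $v$ by comparison with constants, then the gradient bound $\|Dv\|_{L^\infty(\T^n)}\le\al$ obtained by feeding the equation (pointwise a.e.) into the defining threshold of $\al$, and finally the oscillation estimate $\mathrm{osc}(v)\le \sqrt{n}\,\al$ to conclude $w=G(u)\in K$. The constant bookkeeping you worry about does check out, with the factor $3$ and the choice $\lam>C_1+1$ providing exactly the slack you identify.
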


\begin{proof}
Fix $u \in K$, and let $v \in \Lip(\T^n)$ be the viscosity solution to \eqref{eq:u-v}.
First of all, it is clear that $C_0 + 2\al (1+\sqrt{n})$ and $-C_0 - \al(1+\sqrt{n})$ are, respectively, a supersolution and a subsolution to \eqref{eq:u-v}.
The comparison principle then gives
\[
-C_0 - \al(1+\sqrt{n}) \leq v \leq C_0 + 2\al (1+\sqrt{n}) \quad \text{ in } \T^n.
\]
Thus, for a.e. $x\in \T^n$,
\[
H(x,0,Dv(x)) \leq \lam (u(x) - v(x)) + C_1 \|u\|_{L^\infty(\T^n)} < 3\lam (C_0 + \al (1+\sqrt{n})),
\]
which, together with the choice of $\al$, yields $\|Dv\|_{L^\infty(\T^n)} \leq \al$.

Hence, for $w=v- \min_{\T^n} v$, we have $w \in K$. 
\end{proof}

\begin{proof}[Proof of Theorem \ref{thm:exist2}]
By Lemmas \ref{lem:G1}--\ref{lem:G2}, we are able to apply Schauder's fixed point theorem to imply the existence of  $u \in K$ such that
\begin{equation*}
G(u) = u. 
\end{equation*}
This means that, for $v \in C(\T^n)$ solves \eqref{eq:u-v}, $u= v - \min_{\T^n}v $ satisfies
\begin{equation*}
H(x,u,Du) = c:= -\lambda \min_{\T^n} v \quad \text{ in } \T^n.
\end{equation*}
\end{proof}


\section{Some examples on nonuniqueness of solutions to (E)} \label{sec:ex}

In this section, we give several examples to illustrate the nonuniqueness of solutions to (E). 
Our main guiding principle here is that, if $r \mapsto H(x,r,p)$ is not strictly monotone for each $(x,p) \in \T^n \times \R^n$,
then it is highly unlikely the case that (E) has a unique solution.

\begin{ex}\label{ex-4}
{\rm
Assume that $n=1$, and 
\[
H(x,r,p) = |p|^2 + V(x) - \lambda r \quad \text{ for } (x,r,p) \in \T \times \R \times \R,
\]
where $\lambda > 2$ is given.
Clearly, $H_r(x,r,p) = -\lam <0$.
Here, the potential energy $V$ is defined as
\begin{equation*}
V(x) = \begin{cases}
\frac{1}{4} (x-\frac{1}{2})^2 \quad& 0 \leq x  \leq \frac{1}{2},\\
\frac{1}{4} (x+\frac{1}{2})^2 \quad& -\frac{1}{2} \leq x \leq 0.
\end{cases}
\end{equation*}
Extend $V$ to $\R$ in a periodic way.
It is worth noting that $V$ is $C^1$ on the torus except at $0$,
and $V$ is a viscosity solution to 
\[
|V'|^2 - V = 0 \quad \text{ in } \T.
\] 
We use this fact to imply that
\begin{equation*}
u_1  = \frac{\lambda + \sqrt{\lambda^2 - 4}}{2} V \quad \text{and} \quad u_2 = \frac{\lambda - \sqrt{\lambda^2 - 4}}{2} V
\end{equation*}
are two different viscosity solutions of the equation
\begin{equation*}
-\lambda u +  |u'|^2 + V = 0 \quad \text{ in } \T.
\end{equation*}
In other words, $(u_1,0)$ and $(u_2,0)$ are two pairs of solutions to (E) with $c=0$ here.

It is also clear that (E) has at least two solutions for every $c\in \R$. Indeed, for each $c\in \R$, and $i=1,2$, define
\[
u_{i,c} = u_i -\frac{c}{\lam}.
\]
Then $(u_{i,c},c)$ is a solution to (E) for $i=1,2$.
}
\end{ex}
See also \cite[Section 1.4]{G2} for similar comments on the nonuniqueness of both $c$ and $u$.
Surely, one objection that one may have for the above example is that $H_r(x,r,p) = -\lam <0$, which is too restrictive.
Nevertheless, in the following example, we will show that nonuniqueness appears even when $H_r(x,r,p) \geq 0$.

\begin{ex}\label{ex-5}
{\rm
Assume that 
\[
H(x,r,p) = |p| - V(x) + f(r) \quad \text{ for } (x,r,p) \in \T^n \times \R \times \R^n.
\]
Here, $f:\R \to \R$ is defined as
\[
f(r)=
\begin{cases}
0 \quad &\text{ for } r \leq 0,\\
r \quad &\text{ for } r>0.
\end{cases}
\]
And $V \in C(\T^n)$ is the potential energy with $\min_{\T^n} V=0$.
Let $w \in C(\T^n)$ be the viscosity solution to
\begin{equation}\label{eq:ex5}
w + |Dw| - V = 0 \quad \text{ in } \T^n.
\end{equation}
As $0$ is a subsolution to the above, $w \geq 0$.
Besides, it is clear that $w \leq V$, which gives us that $\{V=0\} \subset \{w=0\}$.
In particular, $f(w)=w$ always, and hence, $(w,0)$ is a solution to (E).
From this, it is also clear that (E) has a solution $(w+c,c)$ for every $c\geq 0$. 

Let us now proceed to describe more solutions to (E) with $c=0$.
Consider the usual ergodic (cell problem)
\begin{equation}\label{eq:ex5-E}
|Dv|-V = 0 \quad \text{ in } \T^n,
\end{equation}
which is of eikonal type.
For each solution $v \in C(\T^n)$ of \eqref{eq:ex5-E}, take $C> \|v\|_{L^\infty(\T^n)}$, then $v-C$ is still a solution to \eqref{eq:ex5-E}, and $v-C \leq 0$.
Thus, $f(v-C)=0$, and $(v-C,0)$ is a solution to (E).
}
\end{ex}

\begin{ex}\label{ex-6}
{\rm
Let us analyze further  Example \ref{ex-5}.
Basically, if we put more structural condition on $V$, we are able to find more nontrivial solutions to (E) with $c=0$. Below, we use the setting in Example \ref{ex-5} and present an example in which the solution $u$ has range in both branches of the $f$ function. 

We assume further that $V \in C^1(\T^n)$, and that for some $r \in (0,\frac12)$ we have
\begin{equation}\label{V-zero}
\begin{cases}
V \ge 0 \text{ \ in \ } \T^n \;\; \text{ and } \;\; \{V=0\} = \{0\} \cup \partial B(0,r),  \\
V(x) = \tilde V(|x|) \;\; \text{ for all } |x| \leq r.
\end{cases}
\end{equation}
Here, $\tilde V: [0,r] \to \R$ is $C^1$, $\tilde V \geq 0$, and $\{\tilde V=0\} = \{0,r\}$.

Let $w$ be the solution to \eqref{eq:ex5}. Then clearly $0 \leq w \leq V$, and 
\[
w(x)=0, \quad \text{and} \quad Dw(x)=0 \quad \text{ for each } x \in \partial B(0,r).
\]
Moreover, $w$ is not constantly zero in $\T^n\setminus \ol{B}(0,r)$. 

Next, we construct $\phi:[0,r] \to \R$ such that
\[
\begin{cases}
\phi'(s) = \tilde V(s) \quad \text{ for all } 0 \leq s \leq r,\\
\phi(r)=0.
\end{cases}
\]
Then define $u:\T^n \to \R$ by
\[
u(x)=
\begin{cases}
\phi(|x|) \quad &\text{ for } x \in B(0,r),\\
w(x) \quad &\text{ for } x \in \T^n \setminus B(0,r).
\end{cases}
\]
Clearly, $u <0$ in $B(0,r)$, and $Du(x)=0$ on $\partial B(0,r)$.
Besides, $Du(0)=0$, and therefore, $u$ solves
\[
|Du(x)| = \phi'(|x|) = \tilde V(|x|) = V(x) \quad \text{ for } x\in B(0,r).
\]
We conclude that $(u,0)$ is a solution to (E).
}
\end{ex}

Finally, let us consider the following example, where the Hamiltonian is of magnetic type.

\begin{ex}\label{ex-7}
{\rm
Assume that 
\[
H(x,r,p) = |p|^2 -  p\cdot D\varphi(x)  + f(r) \quad \text{ for } (x,r,p) \in \T^n \times \R \times \R^n.
\]
Here, $f:\R \to \R$ is defined as
\[
f(r)=
\begin{cases}
0 \quad &\text{ for } r \leq 0,\\
r \quad &\text{ for } r>0.
\end{cases}
\]
And $\varphi \in C^1(\T^n)$ is given.
Let us now proceed to describe various solutions to (E) with $c=0$.
The corresponding equation reads
\begin{equation}\label{eq:mag1}
|Du|^2 - Du\cdot D\varphi + f(u) =0 \quad \text{ in } \T^n.
\end{equation}
It is clear that $u \equiv 0$ is a trivial solution to the above.

Now, take any solution $u \in C(\T^n)$ of \eqref{eq:mag1}.
We show that $u \leq 0$. Indeed, take $x_1 \in \T^n$ so that $u(x_1) = \max_{\T^n} u$.
By the viscosity subsolution test, we deduce that
\[
f(u(x_1)) \leq 0 \quad \Rightarrow \quad u(x_1) \leq 0.
\]
Thus, $u\leq 0$, and $u$ solves a usual ergodic (cell problem) without $f$ as following
\begin{equation}\label{eq:mag2}
|Du|^2 - Du\cdot D\varphi = 0 \quad \text{ in } \T^n,
\end{equation}
which is quite an interesting phenomenon.
It is clear that $u_1 \equiv C_1$ for any constant $C_1 \leq 0$, and $u_2 \equiv \varphi+C_2$ for any constant $C_2 \leq -\|\varphi\|_{L^\infty(\T^n)}$ are solutions to \eqref{eq:mag1} and \eqref{eq:mag2}.
Besides, by stability results for convex Hamiltonian, we have further that
\[
u_3 = \min\{u_1,u_2\} = \min\{C_1, \varphi+C_2\}
\]
is also a solution to \eqref{eq:mag1} and \eqref{eq:mag2}.
See also \cite[Example 6.2]{LMT}.
Note that we do not claim here that we have described all solutions to \eqref{eq:mag1}.
}
\end{ex}


\section{Further analysis on a uniqueness set for a prototype case} \label{sec:prototype}

Let us now come back to Hamiltonians of type in Example \ref{ex-5} to do further analysis.
In this section, we always assume (H6). That is, we consider a general class of Hamiltonian of the form
\[
H(x,r,p)=|p|^m - V(x) + f(r) \quad \text{ for } (x,r,p) \in \T^n \times \R \times \R^n.
\]
Here, $m \geq 1$ is a given number, and $V \in C(\T^n)$ is the potential energy with $\min_{\T^n} V=0$.
The function $f:\R \to \R$ is convex, and
\[
\begin{cases}
f(r)=0 \quad &\text{ for } r \leq 0,\\
f(r)>0 \quad &\text{ for } r>0.
\end{cases}
\]
It is clear that the Hamiltonian in Example \ref{ex-5} is a specific case of this class.
Our goal here is to analyze more about solutions of (E) for fixed $c\geq 0$.
We first show that $f$ is nondecreasing.

\begin{lem}
Let $f \in C(\R)$ be given as above. Then $f$ is nondecreasing.
\end{lem}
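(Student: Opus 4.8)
The plan is to prove the inequality $f(x) \le f(y)$ for arbitrary $x < y$ by splitting into three cases according to the position of $x$ and $y$ relative to the origin; the only nontrivial case is $0 < x < y$, where the convexity of $f$ is used decisively.

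First I would dispose of the easy cases. If $y \le 0$, then both $x < y \le 0$ lie in the region where $f$ vanishes, so $f(x) = f(y) = 0$. If $x \le 0 < y$, then $f(x) = 0$ while $f(y) > 0$ by the definition of $f$ in (H6), so $f(x) < f(y)$ at once.

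The heart of the argument is the remaining case $0 < x < y$. Here I would write $x$ as a convex combination of $0$ and $y$, namely
\[
x = \frac{y-x}{y}\cdot 0 + \frac{x}{y}\cdot y, \qquad \frac{y-x}{y},\ \frac{x}{y} \in (0,1).
\]
Applying convexity of $f$ together with $f(0) = 0$ gives
\[
f(x) \le \frac{y-x}{y}\, f(0) + \frac{x}{y}\, f(y) = \frac{x}{y}\, f(y).
\]
Since $0 < x/y < 1$ and $f(y) > 0$ (because $y > 0$), the right-hand side is strictly less than $f(y)$, whence $f(x) < f(y)$. Combining the three cases yields that $f$ is nondecreasing on all of $\R$.

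I do not expect any serious obstacle: the statement is elementary once one observes that the strict positivity $f(y) > 0$ for $y > 0$, combined with $f(0) = 0$ and convexity, forces the nonnegativity of the slopes of $f$. An equivalent route would be to note that the right derivative $f'_+$ of the convex function $f$ is nondecreasing and satisfies $f'_+(0) = \lim_{h \to 0^+} f(h)/h \ge 0$, hence $f'_+ \ge 0$ everywhere; but the convex-combination argument above is the most direct and sidesteps differentiability considerations altogether.
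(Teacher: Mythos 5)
Your proof is correct and is essentially the paper's argument: the paper likewise handles only the nontrivial case $0<r<s$ by writing $r$ as a convex combination of $0$ and $s$ and using $f(0)=0$ to get $f(r)\le \tfrac{r}{s}f(s)\le f(s)$. Your explicit treatment of the trivial cases and the remark about one-sided derivatives are fine but add nothing beyond what the paper's one-line computation already covers.
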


\begin{proof}
Take $0<r<s$. By the convexity of $f$, we have
\[
0<f(r) \leq \frac{r}{s} f(s) + \left(1-\frac{r}{s}\right) f(0) = \frac{r}{s} f(s) \leq f(s).
\]
\end{proof}

We give a proof of our first uniqueness result when $c>0$.

\begin{proof}[Proof of Proposition \ref{prop:unique-positive}]
As we explain in Section \ref{sec:exist}, Example \ref{ex-5}, for every $c\ge0$, (E) has viscosity solutions. 
Let $u \in C(\T^n)$ be a solution to (E) with the given $c>0$ on the right hand side, that is,
\begin{equation}\label{E-c}
|Du|^m - V(x) + f(u) = c \quad \text{ in } \T^n.
\end{equation}
Then, $f(u) \leq V+c$, which means that $u \leq C$.

Next, pick $x_1 \in \T^n$ so that $u(x_1) = \min_{\T^n} u$. By the viscosity supersolution test,
\[
-V(x_1)+ f(u(x_1)) \geq c \quad \Rightarrow  \quad f(u(x_1)) \geq c >0 \quad \Rightarrow \quad u(x_1) \geq \bar c = f^{-1}(c)>0.
\]
Therefore, $ \bar c \leq u \leq C$. Since $f$ is convex and increasing, we can find $0<\lam \leq \Lam$ such that
\[
\lam \leq f'(r) \leq \Lam \quad \text{ for a.e. } r \in [\bar c, C].
\]
We now can apply classical theory of viscosity solution to imply the uniqueness of solutions to \eqref{E-c}.
For convenience later on, denote by $u_c$ this unique solution.
\end{proof}
One key feature we used in the above proof is that $\phi \equiv 0$ is a subsolution to (E) for all $c\geq 0$.
In particular, for $c>0$, $\phi \equiv 0$ is a strict subsolution, and therefore, we were able to get that $u>0$.

On the other hand, for $c=0$, we have seen in Examples \ref{ex-5} and \ref{ex-6} that we do not have uniqueness for (E).
It turns out that $M_V$ is a uniqueness set for (E) in this case, which is exactly the content of the following proof.

\begin{proof}[Proof of Proposition \ref{prop:unique-0}]
Assume that $(u_1,0), (u_2,0)$ are two solutions to {\rm (E)}, and $u_1=u_2$ on $M_V$. If $M_V = \T^n$, there is nothing to prove. Hence, we assume below that $\T^n\setminus M_V$ is nonempty.

Assume by contradiction  that there exists $x_0\in\T^n\setminus M_V$ such that
\[
\max_{\T^n} (u_1-u_2) = u_1(x_0) - u_2(x_0) >0.
\] 
Take $\lam \in (0,1)$, which is very close to $1$, such that $\lam u_1(x_0) > u_2(x_0)$, and
\[
\lam u_1(x_0) - u_2(x_0) > \lam u_1(x) - u_2(x) = -(1-\lam)u_1(x) \quad \text{ for all } x\in M_V.
\] 
Then, 
\[
\max_{\T^n}(\lam u_1-u_2)=(\lam u_1-u_2)(x_\lam)>0 
\] 
for some $x_\lam\in\T^n\setminus M_V$. 

Due to the convexity of $r \mapsto f(r)$ and $p \mapsto |p|^m$, denote by  $v=\lam u_1= (1-\lam)0+\lam u_1$. 
Then, $v$ satisfies 
\[
|Dv|^m -\lam V+ f(v) \leq 0 \quad \text{ in } \T^n.
\]
We now perform the usual doubling variables technique. For $\ep>0$, consider the auxiliary function
\[
\Phi^\ep(x,y) = v(x) - u_2(y)  - \frac{|x-y|^2}{2\ep}.
\]
Then, $\Phi^\ep$ admits a maximum at $(x_\ep, y_\ep)$, and by passing to a subsequence if needed, $(x_\ep, y_\ep) \to (x_\lam, x_\lam)$ as $\ep \to 0$.
By the viscosity solution tests, we have
\[
\left|\frac{x_\ep-y_\ep}{\ep} \right|^m  - \lam V(x_\ep) + f(v(x_\ep)) \leq 0,
\]
and
\[
\left|\frac{x_\ep-y_\ep}{\ep} \right|^m  -  V(y_\ep) + f(u_2(y_\ep)) \geq 0.
\]
Combine the two inequalities above to yield
\[
-\lam V(x_\ep) + f(v(x_\ep)) \leq -  V(y_\ep) + f(u_2(y_\ep)).
\]
Then, let $\ep \to 0$ to get further that
\[
-\lam V(x_\lam) + f(v(x_\lam)) \leq  - V(x_\lam) + f(u_2(x_\lam)).
\]
Since, $v(x_\lam) > u_2(x_\lam)$, $f(v(x_\lam)) \geq f(u_2(x_\lam))$.
Thus, we end up with a contradiction as $V(x_\lam)>0$.
The proof is complete.
\end{proof}


\section{Uniqueness structure of solutions to (E)} \label{sec:structure}
In this section, we always assume (H1), (H2b), (H3), (H4), and (H5).
Recall that after normalization as explained in Introduction, 
we assume further that $c=0$, and the ergodic problem becomes
\begin{equation*}
H(x,u,Du)=0 \quad \text{ in } \T^n.
\end{equation*}
In \cite{WWY-1}, the authors put an admissible condition (see \cite[Assumption (A), Theorem 1.1]{WWY-1}) to guarantee that (E) has a viscosity solution with $c=0$.
Note further that  in \cite{WWY-1}, they need to require a stronger condition, that is, $H_r(x,r,p)>0$,  than our (H4), which basically guarantees the uniqueness of viscosity solutions to (E).
See \cite{GMT, CCIZ} for related works.

We now use the nonlinear adjoint method to study \eqref{E-0}.

\subsection{Preliminaries}
Here is a first preparatory lemma. Since this is elementary, we omit the proof. 
\begin{lem}\label{lem:conv}
Let $u \in \Lip(\T^n)$ be a solution to \eqref{E-0}.
Let $\rho \in C_c^\infty(\R^n,[0,\infty))$ be a standard mollifier.
For $\del>0$, let $\rho^\del(x) = \del^{-n} \rho(\del^{-1}x)$ for all $x\in \R^n$. 
Denote by $u^\del = \rho^\del * u$. Then,
\[
\|u^\del - u\|_{L^\infty(\T^n)} \leq C\del,
\]
and
\[
\|Du^\del\|_{L^\infty(\T^n)} + \del \|D^2 u^\del\|_{L^\infty(\T^n)} \leq C.
\]
\end{lem}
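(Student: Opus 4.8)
The final statement to prove is Lemma~\ref{lem:conv}, a routine mollification estimate. The plan is to establish the three inequalities by directly exploiting the Lipschitz regularity of the solution $u$ together with the standard scaling properties of the mollifier $\rho^\del$. Since $u$ solves \eqref{E-0} and, under the standing assumptions (H2b), is Lipschitz on $\T^n$ (with a Lipschitz constant controlled via the coercivity estimate, as in the proof of Theorem~\ref{thm:exist2}), I would let $L = \|Du\|_{L^\infty(\T^n)}$ denote this Lipschitz constant and carry the estimates in terms of $L$.

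First I would prove $\|u^\del - u\|_{L^\infty(\T^n)} \le C\del$. For each $x$, writing $u^\del(x) - u(x) = \int_{\R^n} \rho^\del(y)\big(u(x-y) - u(x)\big)\,dy$ and using that $\rho^\del$ is supported in $\overline{B}(0,C\del)$ together with $|u(x-y)-u(x)| \le L|y|$ gives the bound directly, since $\int \rho^\del = 1$. Next, for the gradient estimate, I would use $Du^\del = \rho^\del * Du$ (differentiation passes onto $u$), so $\|Du^\del\|_{L^\infty} \le \|Du\|_{L^\infty} \int \rho^\del = L$, which absorbs into $C$. The one term requiring genuine care is $\del\|D^2 u^\del\|_{L^\infty}$. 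Here I would move both derivatives onto the mollifier only in the scaling sense: writing $D^2 u^\del = (D\rho^\del) * Du$ (i.e., put one derivative on $\rho^\del$ and one on $u$), and noting the scaling identity $\|D\rho^\del\|_{L^1(\R^n)} = \del^{-1}\|D\rho\|_{L^1(\R^n)}$, I obtain $\|D^2 u^\del\|_{L^\infty} \le \|D\rho^\del\|_{L^1}\,\|Du\|_{L^\infty} \le C\del^{-1}L$, so multiplying by $\del$ yields the claimed uniform bound.

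The only subtlety, and the part I would flag as the main (albeit minor) obstacle, is the bookkeeping on the torus: convolution is performed on $\R^n$ against the $\Z^n$-periodic extension of $u$, so I would first note that the periodic extension of a Lipschitz function on $\T^n$ is Lipschitz on $\R^n$ with the same constant, and that $\rho^\del * u$ remains $\Z^n$-periodic, hence descends to a well-defined function on $\T^n$. This justifies treating all of the above integrals over $\R^n$ while reading the sup-norms on $\T^n$. Because each step is a one-line consequence of Young's inequality and the scaling of $\rho^\del$, and because the authors themselves note the lemma is elementary and omit the proof, I expect no real difficulty beyond choosing the constant $C$ to depend only on $n$, $\rho$, and the Lipschitz constant $L$ of $u$.
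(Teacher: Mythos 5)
Your proof is correct and is exactly the standard argument the authors have in mind when they write ``Since this is elementary, we omit the proof'': Lipschitz continuity of $u$ gives the $C\del$ bound, $Du^\del=\rho^\del*Du$ gives the gradient bound, and placing one derivative on the mollifier with the scaling $\|D\rho^\del\|_{L^1}=\del^{-1}\|D\rho\|_{L^1}$ gives the second-derivative bound. Your remark on periodicity of the extension is the right bookkeeping and nothing further is needed.
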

Let us consider the following Cauchy problems
\begin{equation}\label{eq:C1}
\begin{cases}
\ep w^\ep_t + H(x,w^\ep,Dw^\ep) = \ep^4 \Del w^\ep \quad &\text{ in } \T^n \times (0,1),\\
w^\ep(x,0) = u^{\ep^4}(x) \quad &\text{ on } \T^n,
\end{cases}
\end{equation}
and
\begin{equation}\label{eq:C2}
\begin{cases}
\ep \phi^\ep_t + H(x,\phi^\ep,D\phi^\ep) = \ep^4 \Del \phi^\ep \quad &\text{ in } \T^n \times (0,1),\\
\phi^\ep(x,0) = u(x) \quad &\text{ on } \T^n.
\end{cases}
\end{equation}
Here, $u^{\ep^4}$ is $u^\del$ with $\del=\ep^4$.
\begin{lem}\label{lem:close}
We have
\[
\|w^\ep - \phi^\ep\|_{L^\infty(\T^n \times [0,1])} \leq C\ep^4.
\]
\end{lem}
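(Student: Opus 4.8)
The plan is to estimate the difference $z^\ep := w^\ep - \phi^\ep$ by viewing it as a solution of a linear parabolic problem and then applying the comparison principle. Both $w^\ep$ and $\phi^\ep$ solve the same PDE in $\T^n \times (0,1)$ and differ only through their initial data: by Lemma \ref{lem:conv}, the initial gap is $\|u^{\ep^4} - u\|_{L^\infty(\T^n)} \leq C\ep^4$. The key structural input is (H1), the uniform Lipschitz bound of $H$ in the variable $r$. Subtracting the two equations and using (H1) to control the difference $H(x,w^\ep,Dw^\ep) - H(x,\phi^\ep,D\phi^\ep)$, I expect the $r$-dependence to contribute a zeroth-order term bounded by $C_1|z^\ep|$, which is exactly what lets a Gronwall-type estimate close.

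First I would write down the equation satisfied by $z^\ep$. The subtlety is that the first-order terms do not cancel pointwise, so rather than deriving a clean PDE for $z^\ep$ I would argue directly at the level of comparison. The clean approach is to exhibit explicit super- and subsolutions. Set $b(t) = C\ep^4 e^{(C_1/\ep)t}$ (or a comparable exponential), and consider $\phi^\ep + b(t)$ as a candidate supersolution to the problem solved by $w^\ep$. Plugging $\phi^\ep + b(t)$ into the equation $\ep \psi_t + H(x,\psi,D\psi) - \ep^4\Del\psi$ and using (H1) in the $r$-slot together with $\phi^\ep$ solving its own equation, the residual is $\ep b'(t) + [H(x,\phi^\ep+b,D\phi^\ep) - H(x,\phi^\ep,D\phi^\ep)] \geq \ep b'(t) - C_1 b(t)$; choosing $b$ so that $\ep b' = C_1 b$ makes this nonnegative, confirming the supersolution property. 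At $t=0$ we need $\phi^\ep(x,0) + b(0) = u + C\ep^4 \geq u^{\ep^4} = w^\ep(x,0)$, which holds by Lemma \ref{lem:conv} with $b(0) = C\ep^4$. The comparison principle for the viscous equation then yields $w^\ep \leq \phi^\ep + b(t)$ on $\T^n \times [0,1]$, and a symmetric argument gives the lower bound.

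The final step is to control $b$ on the time interval $[0,1]$. With $\ep b' = C_1 b$ we get $b(t) = C\ep^4 e^{(C_1/\ep)t}$, and at $t=1$ this is $C\ep^4 e^{C_1/\ep}$, which blows up and does \emph{not} give the claimed $C\ep^4$ bound. This signals that the naive Gronwall exponent is too crude, and it is the main obstacle. To fix it I would instead take a purely additive (time-linear) barrier: because $H$ is nondecreasing in $r$ by (H4), adding a \emph{nonnegative} constant $k$ to $\phi^\ep$ can only increase $H(x,\phi^\ep + k, D\phi^\ep)$, so $\phi^\ep + k$ is automatically a supersolution whenever $\ep \cdot 0 + [H(x,\phi^\ep+k,\cdot) - H(x,\phi^\ep,\cdot)] \geq 0$, which holds for every $k \geq 0$ with no time growth at all. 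Taking $k = C\ep^4$ to dominate the initial data then forces $w^\ep \leq \phi^\ep + C\ep^4$ directly, and monotonicity used in the opposite direction (subtracting $k$) gives the matching lower bound, so the monotonicity hypothesis (H4) is precisely what removes the exponential blow-up and delivers the uniform $C\ep^4$ estimate.
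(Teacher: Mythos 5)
Your final argument is correct and is essentially the paper's own proof: the authors likewise add the constant $C\ep^4$ to one of the two solutions, use (H4) to check that the shifted function is a supersolution (the constant contributes nothing to the time-derivative, gradient, or Laplacian terms, and only increases $H$ through the $r$-slot), and conclude by the parabolic comparison principle together with the initial-data bound from Lemma \ref{lem:conv}, with a symmetric argument for the other inequality. Your preliminary Gronwall detour via (H1), which you correctly reject because of the $e^{C_1/\ep}$ blow-up, does not appear in the paper, but your repaired argument coincides with theirs.
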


\begin{proof}
Recall that $\|u^{\ep^4} - u\|_{L^\infty(\T^n)} \leq C\ep^4$. 

Let $\varphi(x,t) = w^\ep(x,t) + C\ep^4$ for $(x,t) \in \T^n \times [0,1]$, then $\varphi(x,0) \geq \phi^\ep(x,0)$ for $x\in \T^n$.
Besides, thanks to (H4),
\begin{align*}
&\ep \varphi_t + H(x,\varphi, D\varphi) - \ep^4 \Del \varphi\\
=\,&  \ep w^\ep_t + H(x,w^\ep+ C\ep^4,Dw^\ep) - \ep^4 \Del w^\ep
\geq  \ep w^\ep_t + H(x,w^\ep ,Dw^\ep) - \ep^4 \Del w^\ep =0,
\end{align*}
which means that $\varphi$ is a supersolution of \eqref{eq:C2}.
By the comparison principle, $\phi^\ep \leq \varphi$, and thus,
\[
\phi^\ep(x,t) \leq w^\ep(x,t) + C\ep^4 \quad \text{ for all } (x,t) \in \T^n \times [0,1].
\]
By a symmetric argument, the proof is complete.
\end{proof}

The next result concerns gradient bound of $w^\ep$.

\begin{lem}\label{lem:grad-bound}
There is a constant $C>0$ independent of $\ep>0$ such that
\[
\ep \|w^\ep_t\|_{L^\infty(\T^n \times [0,1])} + \|Dw^\ep\|_{L^\infty(\T^n \times [0,1])} \leq C.
\]
\end{lem}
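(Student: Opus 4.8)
The plan is to establish the two quantitative bounds separately by maximum–principle arguments applied to quantities derived from $w^\ep$, exploiting that by (H3) the Hamiltonian is $C^2$, so that parabolic regularity makes $w^\ep$ a classical (indeed smooth) solution of \eqref{eq:C1} and all the differentiations below are legitimate. A point worth emphasizing at the outset is that the lemma asks only for control of $\ep\,w^\ep_t$ and of $Dw^\ep$, \emph{not} of $\|w^\ep\|_{L^\infty}$; this is fortunate, since the slow-time factor $\ep$ together with the small diffusion $\ep^4$ do not give a uniform sup-norm bound on $w^\ep$ itself. It is precisely because (H1), (H3), (H4) are all uniform in the $r$-variable that we can afford to work without such a bound.

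First I would prove the time-derivative estimate, as it is needed for the gradient bound. Differentiating the smooth equation \eqref{eq:C1} in $t$ and setting $z=w^\ep_t$ gives the linear parabolic equation
\[
\ep z_t + H_r(x,w^\ep,Dw^\ep)\,z + H_p(x,w^\ep,Dw^\ep)\cdot Dz = \ep^4\Del z \qquad \text{in } \T^n\times(0,1).
\]
Since $H_r\ge0$ by (H4), the zeroth-order coefficient has a good sign, and the maximum principle yields $\|z(\cdot,t)\|_{L^\infty(\T^n)}\le \|z(\cdot,0)\|_{L^\infty(\T^n)}$ for all $t\in[0,1]$ (at a positive spatial maximum of $z$ one has $Dz=0$, $\Del z\le0$, hence $\ep z_t\le -H_r z\le0$, and symmetrically at a minimum). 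It remains to bound $\ep z(\cdot,0)$: from the equation at $t=0$, $\ep w^\ep_t(x,0)=\ep^4\Del u^{\ep^4}(x)-H(x,u^{\ep^4}(x),Du^{\ep^4}(x))$, and by Lemma~\ref{lem:conv} we have $\ep^4\|\Del u^{\ep^4}\|_{L^\infty}\le C$ and $\|Du^{\ep^4}\|_{L^\infty}\le C$, so that $H$ is evaluated on a fixed compact set and is bounded. Thus $\ep\|w^\ep_t\|_{L^\infty(\T^n\times[0,1])}\le C$.

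The gradient bound is the main obstacle, and this is where (H3) enters decisively. I would run the Bernstein method on $\phi:=\tfrac12|Dw^\ep|^2$. Differentiating \eqref{eq:C1} in $x_k$, multiplying by $w^\ep_{x_k}$, and summing produces
\[
\ep\phi_t + H_p\cdot D\phi - \ep^4\Del\phi = -(D_xH)\cdot Dw^\ep - 2H_r\phi - \ep^4|D^2w^\ep|^2 .
\]
At a maximum point $(x_0,t_0)$ of $\phi$ on the compact set $\T^n\times[0,1]$ one has $\phi_t\ge0$, $D\phi=0$, $\Del\phi\le0$ (if $t_0=0$ the bound follows from $\|Du^{\ep^4}\|_{L^\infty}\le C$), so the left side is nonnegative and, using $H_r\ge0$ from (H4),
\[
(D_xH)\cdot Dw^\ep \le -\ep^4|D^2w^\ep|^2 \qquad \text{at } (x_0,t_0).
\]
Substituting $H=\ep^4\Del w^\ep-\ep w^\ep_t$, using $\tfrac12(a-b)^2\le a^2+b^2$ and $(\Del w^\ep)^2\le n|D^2w^\ep|^2$, I would estimate $\tfrac12H^2\le n\ep^8|D^2w^\ep|^2+\ep^2(w^\ep_t)^2$, so that for $\ep$ small enough that $n\ep^4\le1$ the two Hessian contributions cancel favourably and
\[
\tfrac12 H(x_0,w^\ep,Dw^\ep)^2 + D_xH(x_0,w^\ep,Dw^\ep)\cdot Dw^\ep \le \ep^2(w^\ep_t)^2 \le C^2,
\]
the last step using the time-derivative bound already proved. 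Finally (H3) provides an $R>0$, independent of $(x,r)$, such that the left-hand quantity exceeds $C^2$ whenever $|p|\ge R$; hence $|Dw^\ep(x_0,t_0)|\le R$, and since $(x_0,t_0)$ is where $|Dw^\ep|$ is largest, $\|Dw^\ep\|_{L^\infty(\T^n\times[0,1])}\le C$ for all small $\ep$.

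The delicate point throughout is the bookkeeping of the degenerate scaling: the viscosity $\ep^4$ is what lets the beneficial $-\ep^4|D^2w^\ep|^2$ term absorb the $\tfrac12H^2$ produced from the equation, but this absorption succeeds only for $\ep$ small and only after the a priori bound on $\ep\,w^\ep_t$ has been secured. The assumption (H3) is tailored exactly to close the loop, since it is the coercivity of the combination $\tfrac12H^2+D_xH\cdot p$ in $p$ that converts the pointwise Bernstein inequality into an $L^\infty$ gradient bound.
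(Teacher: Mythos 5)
Your proof is correct and follows essentially the same route as the paper: the spatial gradient bound via the Bernstein quantity $\tfrac12|Dw^\ep|^2$, the sign of $H_r$ from (H4), the substitution $\ep^4\Delta w^\ep=\ep w^\ep_t+H$ with the smallness condition on $n\ep^4$, and the coercivity in (H3) reproduce the paper's argument exactly. The only variation is in the time-derivative bound, where the paper compares the time-translate $w^\ep(\cdot,\cdot+s)$ with $w^\ep$ via the comparison principle (avoiding differentiation of the equation in $t$), whereas you differentiate the equation and apply the maximum principle to $z=w^\ep_t$ directly; both reduce to the same bound on $\ep w^\ep_t(\cdot,0)$ furnished by Lemma~\ref{lem:conv}, so this is an equally valid, equally standard step.
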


\begin{proof}
Denote by
\[
\varphi^{\pm}(x,t) = w^\ep(x,0) \pm \frac{C}{\ep}t \quad \text{ for all } (x,t) \in \T^n \times [0,1].
\]
Then, $\varphi^-, \varphi^+$ are, respectively, a subsolution, and a supersolution to \eqref{eq:C1}, thanks to (H4).
Hence, by the comparison principle,
\[
\varphi^- \leq w^\ep \leq \varphi^+ \quad \Rightarrow \quad\|w^\ep(\cdot,s)-w^\ep(\cdot,0)\|_{L^\infty} \leq \frac{Cs}{\ep}.
\]
Note next that both $w^\ep$ and $w^\ep(\cdot, \cdot+s)$ solve \eqref{eq:C1} with initial data $w^\ep(\cdot,0)$ and $w^\ep(\cdot,s)$, respectively.
By the comparison principle,
\[
\|w^\ep(\cdot, \cdot+s)-w^\ep\|_{L^\infty} \leq \|w^\ep(\cdot,s)-w^\ep(\cdot,0)\|_{L^\infty} \leq \frac{Cs}{\ep}
 \quad \Rightarrow \quad \ep \|w^\ep_t\|_{L^\infty(\T^n)} \leq C.
\]
To prove the spatial gradient bound, we use the usual Bernstein method.
Let $\psi(x,t) = \frac{|Dw^\ep|^2}{2}$. Then $\psi$ satisfies
\[
\ep \psi_t  + D_pH\cdot D\psi + 2 H_r \psi + D_xH\cdot Dw^\ep = \ep^4 \Del \psi  - \ep^4 |D^2 w^\ep|^2.
\]
Assume that $\max_{\T^n \times [0,1]} \psi = \psi(x_0,t_0)$. If $t_0=0$, then we are done.
If $t_0 >0$, then by the maximum principle, noting that $H_r \geq 0$,
\[
D_xH \cdot Dw^\ep  + \ep^4 |D^2 w^\ep|^2 \leq 0 \quad \text{ at } (x_0,t_0).
\]
For $\ep<n^{-1}$, we have
\[
\ep^4 |D^2 w^\ep|^2 \geq (\ep^4 \Del w^\ep)^2  = (\ep w^\ep_t + H(x,w^\ep, Dw^\ep))^2 \geq \frac{1}{2} H(x,w^\ep, Dw^\ep)^2 - C.
\]
Therefore,
\[
\frac{1}{2} H(x,w^\ep, Dw^\ep)^2 +D_xH \cdot Dw^\ep  \leq C \quad \text{ at } (x_0,t_0),
\]
which, together with (H3), yields the desired result.
\end{proof}

\begin{lem}\label{lem:ep}
We have
\[
\|w^\ep - u\|_{L^\infty(\T^n \times [0,1])}+\|\phi^\ep - u\|_{L^\infty(\T^n \times [0,1])} \leq C\ep.
\]
\end{lem}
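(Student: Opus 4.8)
The plan is to deduce the estimate from a two-sided barrier argument for $w^\ep$ and then transfer it to $\phi^\ep$. Since Lemma \ref{lem:close} already gives $\|w^\ep - \phi^\ep\|_{L^\infty(\T^n\times[0,1])} \le C\ep^4$, it suffices to prove $\|w^\ep - u\|_{L^\infty(\T^n\times[0,1])} \le C\ep$, after which the bound for $\phi^\ep$ follows from the triangle inequality. I would establish the two one-sided estimates $u - C\ep \le w^\ep \le u + C\ep$ by constructing, respectively, a subsolution and a supersolution of the Cauchy problem \eqref{eq:C1} that lie within $O(\ep)$ of $u$ and are correctly ordered with the initial datum $u^{\ep^4}$, and then applying the comparison principle.

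For the lower bound I would regularize $u$ by mollification at a scale $\eta$ to be chosen, writing $u^\eta = \rho^\eta * u$. Using the convexity of $(r,p)\mapsto H(x,r,p)$ in (H5) together with Jensen's inequality, and the fact that $H(x,u,Du)\le 0$ a.e. (as $u$ is a Lipschitz subsolution of a convex equation), one obtains the consistency estimate $H(x,u^\eta,Du^\eta) \le C\eta$, the error $C\eta$ coming from the Lipschitz dependence of $H$ on $x$ on the compact set determined by the bounds $\|u\|_{L^\infty}, \|Du\|_{L^\infty} \le C$ and (H3). Then $\underline{w}(x,t) := u^\eta(x) - g(t)$ is a classical subsolution of \eqref{eq:C1} as soon as $\ep g'(t) \ge C\eta + \ep^4\|\Del u^\eta\|_{L^\infty(\T^n)}$, where (H4) is used to discard the lower-order shift $-g(t)$; by Lemma \ref{lem:conv} one has $\ep^4\|\Del u^\eta\|_{L^\infty(\T^n)} \le C\ep^4/\eta$. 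Choosing $\eta=\ep^2$ balances the two errors $\eta$ and $\ep^4/\eta$, so that $g(t)=C\ep t$ suffices; after lowering $\underline{w}$ by $C\ep^2$ to ensure $\underline w(\cdot,0)\le u^{\ep^4}=w^\ep(\cdot,0)$, comparison yields $w^\ep \ge u^{\ep^2}-C\ep^2-C\ep t \ge u - C\ep$ on $\T^n\times[0,1]$.

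For the upper bound the same mollification will not work, because convexity produces only a one-sided consistency estimate. Instead I would use the inf-convolution $u_\eta(x):=\inf_{y}\big(u(y)+\tfrac{1}{2\eta}|x-y|^2\big)$, which is semiconcave with $D^2u_\eta\le \eta^{-1}I$ and is an approximate supersolution, $H(x,u_\eta,Du_\eta)\ge -C\eta$ in the viscosity sense; both facts are standard, and the error is again linear in $\eta$ since the associated minimizer obeys $|x-y^*|\le C\eta$, keeping the relevant gradients bounded. The crucial point is that semiconcavity controls the viscous term from above: any smooth function touching $u_\eta+a(t)$ from below has spatial Laplacian $\le n/\eta$ at the contact point, so $\overline w(x,t):=u_\eta(x)+a(t)$ is a viscosity supersolution of \eqref{eq:C1} provided $\ep a'(t)\ge C\eta + n\ep^4/\eta$, with (H4) absorbing $a(t)\ge 0$. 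Taking again $\eta=\ep^2$ gives $a(t)=C\ep t$; raising $\overline w$ by $C\ep^2$ so that $\overline w(\cdot,0)\ge u^{\ep^4}$ and invoking comparison yields $w^\ep \le u + C\ep$.

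The main obstacle, and the reason a naive barrier fails, is precisely the viscous term: at the natural regularization scale $\eta=\ep^4$ (the scale of the initial datum) one has $\ep^4\|\Del u^{\ep^4}\|_{L^\infty(\T^n)}=O(1)$, which would force barriers of size $O(1/\ep)$ and destroy the estimate. The resolution is twofold: regularize at the coarser scale $\eta=\ep^2$ to make $\ep^4/\eta=\ep^2$ small, and exploit one-sided Hessian information --- convexity (H5) on the subsolution side and semiconcavity of the inf-convolution on the supersolution side --- so that in each comparison only the favorable sign of $\Del$ enters. Verifying the two consistency estimates and the admissibility of the barriers in the viscosity sense, together with the routine parabolic comparison principle for \eqref{eq:C1} (valid thanks to the $\ep^4\Del$ regularization and (H4)), then completes the argument.
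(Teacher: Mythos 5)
Your proof is correct, but it follows a genuinely different route from the paper. The paper proves the bound for $w^\ep$ by doubling variables: it maximizes $\Phi(x,y,t)=w^\ep(x,t)-u(y)-\tfrac{|x-y|^2}{2\ep^2}-K\ep t$, uses the Lipschitz bounds to get $|x_\ep-y_\ep|\le C\ep^2$, and plays the viscosity sub/supersolution tests against each other; the Laplacian of the quadratic penalization contributes $\ep^4\cdot n/\ep^2=n\ep^2$ and the $x$-dependence of $H$ contributes $C|x_\ep-y_\ep|\le C\ep^2$, which forces $t_\ep=0$ once $K=C+n+1$. Your barrier construction performs exactly the same balancing (the penalization scale $\ep^2$ in the paper plays the role of your regularization scale $\eta=\ep^2$), but packages it as explicit sub/supersolutions: mollification plus Jensen on the subsolution side, inf-convolution plus semiconcavity on the supersolution side. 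Both arguments are valid under the standing assumptions of Section \ref{sec:structure}; the one substantive difference is that your lower bound genuinely uses the convexity (H5), whereas the paper's doubling argument needs only (H3)--(H4), so the paper's proof is slightly more economical in hypotheses (and symmetric in the two directions, while yours must treat them by different mechanisms, as you correctly recognize). One small inaccuracy in your write-up: the fact that a Lipschitz viscosity subsolution satisfies $H(x,u,Du)\le 0$ a.e.\ follows from Rademacher's theorem alone and does not require convexity; (H5) enters only when you apply Jensen's inequality to conclude $H(x,u^\eta,Du^\eta)\le C\eta$. With that noted, your consistency estimates, the semiconcavity bound $\Del\phi\le n/\eta$ at contact points, the initial-time orderings, and the reduction of the $\phi^\ep$ bound to Lemma \ref{lem:close} are all sound.
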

The proof of this is similar to that of \cite[Proposition 5.5]{LMT}.
Nevertheless, let us present a simple proof here for completeness.

\begin{proof}
We only need to show that $\|w^\ep - u\|_{L^\infty(\T^n \times [0,1])} \leq C\ep$.
Let us first get an upper bound for $w^\ep - u$.
Define an auxiliary function
\[
\Phi(x,y,t) = w^\ep(x,t) - u(y) - \frac{|x-y|^2}{2 \ep^2} - K\ep t \quad \text{ for } (x,y,t) \in \T^n \times \T^n \times [0,1],
\]
where $K>0$ is to be chosen.
Pick $(x_\ep, y_\ep, t_\ep ) \in  \T^n \times \T^n \times [0,1]$ so that
\[
\Phi(x_\ep, y_\ep, t_\ep ) = \max_{ \T^n \times \T^n \times [0,1]} \Phi.
\]
If $\Phi(x_\ep, y_\ep, t_\ep )  \leq 0$, then we are done as
\[
w^\ep(x,t) - u(x) = \Phi(x,x,t) +K \ep t \leq K \ep.
\]
Therefore, we can assume $\Phi(x_\ep, y_\ep, t_\ep ) > 0$.
This gives that $w^\ep(x_\ep,t_\ep)>u(y_\ep)$.

Let us consider first the case that $t_\ep>0$.
Since $w^\ep$ and $u$ are Lipschitz, by comparing $\Phi(x_\ep,y_\ep,t_\ep)$ with $\Phi(y_\ep,y_\ep,t_\ep)$, we deduce first that
\[
|x_\ep - y_\ep | \leq C\ep^2.
\]
By the viscosity subsolution and supersolution tests, we have
\[
K \ep^2  + H\left(x_\ep, w^\ep(x_\ep, t_\ep), \frac{x_\ep - y_\ep}{\ep^2} \right) \leq \ep^4 \frac{n}{\ep^2} = n\ep^2,
\]
and
\[
H\left(y_\ep, u(y_\ep), \frac{x_\ep - y_\ep}{\ep^2} \right) \geq 0.
\]
Combine these two inequalities, and use (H3), (H4) to imply
\begin{align*}
K \ep^2 &\leq  n\ep^2+   H\left(y_\ep, u(y_\ep), \frac{x_\ep - y_\ep}{\ep^2} \right) - H\left(x_\ep, w^\ep(x_\ep, t_\ep), \frac{x_\ep - y_\ep}{\ep^2} \right)\\
&\leq n \ep^2  + C|y_\ep - x_\ep| +H\left(x_\ep, u(y_\ep), \frac{x_\ep - y_\ep}{\ep^2} \right) - H\left(x_\ep, w^\ep(x_\ep, t_\ep), \frac{x_\ep - y_\ep}{\ep^2} \right)\\
& \leq n \ep^2  + C|y_\ep - x_\ep|  \leq (C+n) \ep^2.
\end{align*}
By picking $K = C+n+1$, we conclude that $t_\ep$ cannot be positive. 
Thus, $t_\ep  = 0$, and 
\[
\Phi(x_\ep, y_\ep, t_\ep ) \leq  u^{\ep^4}(x_\ep) -u(y_\ep) \leq C\ep^4 + C|x_\ep - y_\ep| \leq C\ep^2.
\]
Then, for $(x,t) \in \T^n \times [0,1]$,
\[
w^\ep(x,t) - u(x) = \Phi(x,x,t) +K \ep t \leq C\ep^2+ K \ep \leq C\ep.
\]

To get the other bound, we need to get an upper bound of $u-w^\ep$.
This can be done analogously to the above by carefully considering another auxiliary function
\[
\Psi(x,y,t) = u(x) - w^\ep(y,t) - \frac{|x-y|^2}{2 \ep^2} - K\ep t \quad \text{ for } (x,y,t) \in \T^n \times \T^n \times [0,1],
\]
where $K>0$ is to be chosen. We omit the proof of this part here.
\end{proof}

\subsection{Nonlinear adjoint method and adjoint measures}
Let $u \in \Lip(\T^n)$ be a solution to \eqref{E-0}.
Let $w^\ep$ be the solution to \eqref{eq:C1} with this fixed $u$, that is, $w^\ep(x,0) = u^{\ep^4}(x)$ in $\T^n$.

The linearized operator of \eqref{eq:C1} about the solution $w^\ep$ is
\[
\cL^\ep[\phi]= \ep \phi_t + H_r(x,w^\ep,Dw^\ep) \phi + D_pH(x,w^\ep,Dw^\ep)\cdot D\phi - \ep^4 \Del \phi.
\]
The corresponding adjoint equation is
\begin{equation}\label{eq:sig}
\begin{cases}
-\ep \sig^\ep_t +H_r(x,w^\ep,Dw^\ep) \sig^\ep - {\rm div}(D_pH(x,w^\ep,Dw^\ep) \sig^\ep)  = \ep^4 \Del \sig^\ep \quad &\text{in } \T^n \times (0,1),\\
\sig^\ep(x,1) = \del_{x_0}.
\end{cases}
\end{equation}
Here, $\del_{x_0}$ is the Dirac delta measure at $x_0 \in \T^n$.
It is clear that $\sig^\ep>0$ in $\T^n \times (0,1)$.

\begin{prop}\label{prop:sig}
The following holds
\begin{align*}
& \frac{d}{dt}\int_{\T^n}\sig^\ep(x,t)\,dx
= \frac{1}{\ep}\int_{\T^n}H_r(x,w^\ep,Dw^\ep)\sig^\ep\,dx\ge0, \\
& 0\le \int_{\T^n}\sig^\ep(x,t)\,dx\le 1 \quad\text{ for all } \quad 0\le t < 1. 
\end{align*}
\end{prop}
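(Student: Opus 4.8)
The plan is to integrate the adjoint equation \eqref{eq:sig} over $\T^n$ and exploit both its divergence structure and the monotonicity hypothesis (H4).

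First I would note that, although the terminal datum $\sig^\ep(\cdot,1)=\del_{x_0}$ is merely a measure, the term $\ep^4\Del\sig^\ep$ renders \eqref{eq:sig} a (backward) uniformly parabolic equation; hence for every $t\in(0,1)$ the section $x\mapsto\sig^\ep(x,t)$ is smooth and strictly positive. Consequently $m(t):=\int_{\T^n}\sig^\ep(x,t)\,dx$ is well defined and differentiable on $(0,1)$, and the manipulations below are legitimate. Solving \eqref{eq:sig} for $\ep\sig^\ep_t$ and integrating in $x$ over $\T^n$, the two terms
\[
\int_{\T^n}{\rm div}\big(D_pH(x,w^\ep,Dw^\ep)\sig^\ep\big)\,dx
\quad\text{and}\quad
\int_{\T^n}\ep^4\Del\sig^\ep\,dx
\]
both vanish, since the integral over the boundaryless torus of a divergence of a periodic field is zero. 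This leaves exactly $\ep\,m'(t)=\int_{\T^n}H_r(x,w^\ep,Dw^\ep)\sig^\ep\,dx$, which is the claimed identity after dividing by $\ep$. The inequality $m'(t)\ge0$ is then immediate: (H4) gives $H_r\ge0$, and $\sig^\ep>0$, so the integrand is nonnegative. In particular $m$ is nondecreasing on $(0,1)$.

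For the two-sided bound, the lower bound $m(t)\ge0$ is clear from $\sig^\ep>0$. For the upper bound I would combine monotonicity with the terminal mass: since $\sig^\ep(\cdot,t)\to\del_{x_0}$ weakly-$*$ as $t\to1^-$ and the constant $1$ is a continuous test function on the compact torus, $m(t)=\langle\sig^\ep(\cdot,t),1\rangle\to\langle\del_{x_0},1\rangle=1$. Because $m$ is nondecreasing, this forces $m(t)\le\lim_{s\to1^-}m(s)=1$ for all $t\in[0,1)$.

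The only delicate point is the behaviour at the terminal time: one must invoke parabolic smoothing to know that $m$ is differentiable on $(0,1)$, and the weak convergence $\sig^\ep(\cdot,t)\to\del_{x_0}$ to pin down $\lim_{t\to1^-}m(t)=1$. Everything else reduces to the elementary fact that divergences and Laplacians of periodic functions integrate to zero over $\T^n$.
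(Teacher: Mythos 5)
Your proof is correct and follows essentially the same route as the paper: integrate the adjoint equation over the torus, observe that the divergence and Laplacian terms vanish, and use $H_r\ge 0$ together with $\sig^\ep>0$ for the sign; the upper bound then follows from monotonicity of the mass and the unit terminal mass of $\del_{x_0}$. The paper's own proof is terser (it dismisses the second claim as immediate), but your filled-in details on parabolic smoothing and the limit $t\to1^-$ are exactly the right justifications.
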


\begin{proof}
For $t\in (0,1)$, integrate \eqref{eq:sig} on $\T^n$ to yield 
\begin{align*}
\ep\dfrac{d}{dt}\int_{\T^n}\sig^\ep\,dx
&\,
=\int_{\T^n} H_r(x,w^\ep,Dw^\ep)\sig^\ep-\Div(D_pH(x,w^\ep,Dw^\ep)\sig^\ep)
-\ep^4\Del \sig^\ep\,dx\\
&\, 
=\int_{\T^n}H_r(x,w^\ep,Dw^\ep)\sig^\ep\,dx\ge0,
\end{align*}
which gives the first claim. The second claim follows immediately.
\end{proof}

For each $\sig^\ep$, there exist  a nonnegative Radon measure  $\nu^\ep \in\cR(\T^n )$ satisfying 
\[
\int_{0}^{1}\int_{\T^n}\psi(x)\sig^\ep(x,t)\,dxdt
=\int_{\T^n}\psi(x)\,d\nu^\ep(x) \quad \text{ for all } \psi \in C(\T^n).
\]
In fact, for a Borel measurable set $A\subset \T^n$,
\[
\nu^\ep(A) = \int_0^1 \int_A \sig^\ep(x,t)\,dxdt.
\]
By Proposition \ref{prop:sig}, $\nu^\ep(\T^n) \leq 1$.
We are able to pick a subsequence $\{\ep_j\} \to 0$ such that 
\[
\nu^{\ep_j}\rightharpoonup \nu
\]
as $j\to\infty$ weakly in the sense of measures. 

\begin{defn}\label{def:M}
We define the set $\cM\subset\cR(\T^n)$ as 
\[
\cM:=\bigcup_{\substack{u \in \cS, \, x_0\in\T^n\\ \{\ep_j\} \to 0}}\{\nu\}, 
\]
where $\cS$ denote the family of all viscosity solutions to \eqref{E-0}.  
Here, we collect all possible subsequential weak limits (in the sense of measure) of $\{\nu^\ep\}$ for all $x_0 \in \T^n$.

We call each measure $\nu \in \cM$ an adjoint measure of \eqref{E-0}.
We say that $\cM$ is the set of adjoint measures corresponding to \eqref{E-0}.
\end{defn}
\begin{rem}
It is important noting that the set $\cM$ is defined implicitly as 
it depends on all solutions to \eqref{E-0}, which are not known a priori. 
\end{rem}
It turns out that the adjoint measures give us the uniqueness property of solutions to \eqref{E-0} as stated in Theorem \ref{thm:unique}.
Here is the proof of our main theorem on uniqueness property.
\begin{proof}[Proof of Theorem \ref{thm:unique}]
For $i=1,2$, let $w^\ep_i$ be the solution to \eqref{eq:C1} with initial data $u_i^{\ep^4}$.
By the convexity assumption (H5), we subtract the equations for $w^\ep_1$ and $w^\ep_2$ to get
\begin{multline}\label{eq:diff}
\ep (w^\ep_1 - w^\ep_2)_t + H_r(x,w^\ep_2, Dw^\ep_2)(w^\ep_1 - w^\ep_2)+ D_pH(x,w^\ep_2, Dw^\ep_2)\cdot D(w^\ep_1 - w^\ep_2)\\
\leq \ep^4 \Del (w_1^\ep - w_2^\ep).
\end{multline}
Let $\sig^\ep$ be the solution to 
\begin{equation*}
\begin{cases}
-\ep \sig^\ep_t +H_r(x,w^\ep_2,Dw^\ep_2) \sig^\ep - {\rm div}(D_pH(x,w^\ep_2,Dw^\ep_2) \sig^\ep)  = \ep^4 \Del \sig^\ep \quad &\text{in } \T^n \times (0,1),\\
\sig^\ep(x,1) = \del_{x_0},
\end{cases}
\end{equation*}
for $x_0 \in \T^n$ fixed.
Multiply \eqref{eq:diff} by $\sig^\ep$, integrate on $\T^n$ to imply
\[
\frac{d}{dt} \int_{\T^n} (w^\ep_1 - w^\ep_2)\sig^\ep\,dx \leq 0.
\]
In particular,
\[
(w_1^\ep - w_2^\ep)(x_0,1) \leq \int_0^1 \int_{\T^n} (w^\ep_1 - w^\ep_2)\sig^\ep\,dxdt
\]
By letting $\ep \to 0$ (and passing to a subsequence if needed) and using Lemma \ref{lem:ep}, we deduce that
\[
(u_1-u_2)(x_0) \leq \int_{\T^n} (u_1 - u_2) \,d\nu(x) \leq 0,
\]
for some $\nu \in \cM$.

\smallskip

Thus, $u_1(x_0) \leq u_2(x_0)$ for every $x_0 \in \T^n$.
The proof is complete.
\end{proof}

Set
\[
M := \overline{ \bigcup_{\nu \in \cM} {\rm supp} (\nu)}.
\]
Then we have the following corollary, which is an immediate consequence of Theorem \ref{thm:unique}.

\begin{cor}
Let $u_1, u_2$ be two solutions to \eqref{E-0}. 
Assume that $u_1 \leq u_2$ on $M$. 
Then, $u_1 \leq u_2$.
\end{cor}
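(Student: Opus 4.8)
The plan is to check that the pointwise hypothesis ``$u_1\le u_2$ on $M$'' forces the integral hypothesis of Theorem \ref{thm:unique}, and then simply invoke that theorem. The two facts I would rely on are that every $\nu\in\cM$ is a \emph{nonnegative} measure and that $\supp(\nu)\subseteq M$; both are already built into the construction of $\cM$.

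First I would recall nonnegativity. Each approximating measure $\nu^\ep$ satisfies $\nu^\ep(A)=\int_0^1\int_A\sig^\ep(x,t)\,dx\,dt$ with $\sig^\ep>0$ in $\T^n\times(0,1)$, so $\nu^\ep$ is a nonnegative Radon measure; since weak-$\ast$ limits of nonnegative measures remain nonnegative, every subsequential limit $\nu\in\cM$ is nonnegative as well. Next, by the very definition $M=\overline{\bigcup_{\nu\in\cM}\supp(\nu)}$, we have $\supp(\nu)\subseteq M$ for each $\nu\in\cM$.

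Now fix $\nu\in\cM$. Because $u_1\le u_2$ holds on $M$ and $\supp(\nu)\subseteq M$, the inequality $u_1\le u_2$ holds everywhere on $\supp(\nu)$. Integrating the nonnegative function $u_2-u_1$ against the nonnegative measure $\nu$, which is concentrated on $\supp(\nu)$, gives
\[
\int_{\T^n} u_1(x)\,d\nu(x)=\int_{\supp(\nu)} u_1\,d\nu\le \int_{\supp(\nu)} u_2\,d\nu=\int_{\T^n} u_2(x)\,d\nu(x).
\]
Since $\nu\in\cM$ was arbitrary, the hypothesis of Theorem \ref{thm:unique} is verified, and that theorem yields $u_1\le u_2$ on all of $\T^n$.

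There is essentially no hard step here, as the result is an immediate consequence of Theorem \ref{thm:unique}; the only point deserving a word of care is the nonnegativity of the limiting measures $\nu$, which is what makes the pointwise comparison on $\supp(\nu)$ transfer to the comparison of integrals. Everything else is a direct reduction to the already-proved theorem.
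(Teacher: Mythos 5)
Your proposal is correct and follows exactly the route the paper intends: the paper gives no separate proof, simply noting the corollary is an immediate consequence of Theorem \ref{thm:unique}, and your argument supplies precisely the (easy) missing details — nonnegativity of each $\nu\in\cM$ and the inclusion ${\rm supp}(\nu)\subseteq M$ — needed to pass from the pointwise hypothesis on $M$ to the integral hypothesis of the theorem.
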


\begin{rem}
Of course, this uniqueness result tells us that it is extremely important to have further understanding of the adjoint measures in $\cM$.
In case that $H(x,r,p) = \tilde H(x,p)$ for all $(x,r,p) \in \T^n \times \R \times \R^n$ with $\tilde H$ satisfies appropriate conditions, then these adjoint measures $\nu \in \cM$ turn out to be projected Mather measures (see \cite{FaB, MT-P}).

In the general setting, one objection one might have is that $\cM$ is defined in an abstract way, which depends on the set of solutions of \eqref{E-0} itself, 
and it is not clear how to analyze it.
This is a fair point, and $\cM$ should be studied much more in the near future.
In particular, one question of interests is whether $\cM$ can be defined without using $\cS$.

Nevertheless, in the following two interesting situations, we are able to provide full characterization of $\cM$.
These results are consistent with the classical literature of viscosity solutions, and also with Proposition \ref{prop:unique-0}.
 \end{rem}

\subsection{Adjoint measures in the strictly monotone case} \label{subsec:structure1}
We have the following result, which is consistent with the classical literature of viscosity solutions \cite{CEL, CL}.
\begin{prop}\label{prop:st-inc}
Assume that
\[
H_r(x,r,p) >0 \quad \text{ for all } (x,r,p)\in\T^n\times \R\times\R^n.  
\]
Then,
\[
\cM=\{0\}, \quad\text{and} \quad M=\emptyset. 
\]
\end{prop}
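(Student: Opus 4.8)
The plan is to show that under the strict monotonicity assumption $H_r>0$, the adjoint measures all vanish. The key is to revisit Proposition \ref{prop:sig}, which gives
\[
\ep\frac{d}{dt}\int_{\T^n}\sig^\ep(x,t)\,dx=\int_{\T^n}H_r(x,w^\ep,Dw^\ep)\sig^\ep\,dx.
\]
First I would extract a quantitative lower bound on this production term. Since $w^\ep$ stays uniformly close to the Lipschitz solution $u$ (Lemma \ref{lem:ep}) and has uniformly bounded gradient (Lemma \ref{lem:grad-bound}), the arguments $(x,w^\ep,Dw^\ep)$ range over a fixed compact subset of $\T^n\times\R\times\R^n$; by the continuity of $H_r$ (recall $H\in C^2$ by (H3)) and the hypothesis $H_r>0$, there exists a constant $\alpha>0$, independent of $\ep$, such that $H_r(x,w^\ep,Dw^\ep)\ge\alpha$ on $\T^n\times[0,1]$. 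Writing $m^\ep(t):=\int_{\T^n}\sig^\ep(x,t)\,dx$, this turns the identity into the differential inequality $\ep\,(m^\ep)'(t)\ge\alpha\,m^\ep(t)$.

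Next I would integrate this inequality together with the terminal normalization $m^\ep(1)=\int_{\T^n}\sig^\ep(x,1)\,dx=\nu^\ep$-mass at time $1$, which equals $1$ since $\sig^\ep(\cdot,1)=\del_{x_0}$. Gronwall's inequality (run backward in time) yields $m^\ep(t)\le e^{-\alpha(1-t)/\ep}$ for all $t\in[0,1]$. Consequently the total mass of the approximating measure satisfies
\[
\nu^\ep(\T^n)=\int_0^1 m^\ep(t)\,dt\le\int_0^1 e^{-\alpha(1-t)/\ep}\,dt=\frac{\ep}{\alpha}\bigl(1-e^{-\alpha/\ep}\bigr)\le\frac{\ep}{\alpha}.
\]
Thus $\nu^\ep(\T^n)\to0$ as $\ep\to0$. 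Since every $\nu\in\cM$ is a weak-$*$ subsequential limit of such $\nu^{\ep_j}$ and the total mass is weakly lower semicontinuous (indeed here it passes to the limit because $\nu^\ep(\T^n)\to 0$ forces $\nu(\T^n)=0$ for any weak limit), the limiting measure is the zero measure. As this holds for every choice of solution $u\in\cS$, every $x_0\in\T^n$, and every subsequence, we conclude $\cM=\{0\}$, whence $M=\overline{\bigcup_{\nu\in\cM}\supp(\nu)}=\emptyset$.

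The main obstacle I anticipate is making the uniform lower bound $H_r\ge\alpha$ rigorous: it relies on confining $(w^\ep,Dw^\ep)$ to a compact set uniformly in $\ep$, which is exactly what Lemmas \ref{lem:grad-bound} and \ref{lem:ep} provide, so the difficulty is more bookkeeping than substance. A secondary point to handle with care is the passage from $\nu^\ep(\T^n)\to0$ to $\nu=0$: since $\nu^{\ep_j}\rightharpoonup\nu$ weakly as measures and the constant function $1$ is a valid test function on the compact space $\T^n$, we get $\nu(\T^n)=\lim_j\nu^{\ep_j}(\T^n)=0$, and a nonnegative measure of zero mass is the zero measure. This direct testing against $\psi\equiv 1$ sidesteps any subtlety with semicontinuity, so the whole argument reduces to the clean exponential mass-decay estimate above.
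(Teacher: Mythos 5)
Your proposal is correct and follows essentially the same route as the paper: a uniform lower bound $H_r(x,w^\ep,Dw^\ep)\ge\alpha$ from the a priori bounds on $w^\ep$ and $Dw^\ep$, the resulting differential inequality $\ep\,(m^\ep)'\ge\alpha\,m^\ep$ for the mass of $\sig^\ep$, and the backward Gronwall estimate $m^\ep(t)\le e^{-\alpha(1-t)/\ep}$ giving $\nu^\ep(\T^n)\le\frac{\ep}{\alpha}(1-e^{-\alpha/\ep})\to0$. The paper implements the Gronwall step by multiplying the adjoint equation by $e^{-\alpha t/\ep}$ before integrating over $\T^n$, but this is only a cosmetic difference from your argument.
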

\begin{proof}
Let $u$ be a solution of \eqref{E-0}.
Let $w^\ep$ be the solution to \eqref{eq:C1} with initial data $u^{\ep^4}$.
Let $C_2=\|u\|_{L^\infty(\T^n)}+\|Du\|_{L^\infty(\T^n)}$.
Then, for $\ep<1$, we can find $C_3>0$ such that $\|w^\ep\|_{L^\infty(\T^n)}+\|Dw^\ep\|_{L^\infty(\T^n)} \leq C_3$.
By our hypothesis, we are able to find $\al>0$ such that
\[
H_r(x,w^\ep,Dw^\ep) \geq \al \quad \text{ for all } x\in \T^n.
\]
Use this in the adjoint equation to deduce
\[
-\ep\sig^\ep_t+\al\sig^\ep-\Div(D_pH(x,w^\ep,Dw^\ep)\sig^\ep)
\le \ep^4\Del \sig^\ep,   
\]
which implies 
\[
-\ep(e^{-\frac{\al t}{\ep}}\sig^\ep)_t-\Div(D_pH(x,w^\ep,Dw^\ep)e^{-\frac{\al t}{\ep}}\sig^\ep)
\le \ep^4\Del (e^{-\frac{\al t}{\ep}} \sig^\ep). 
\]
Integrate the above on $\T^n$ to obtain
\[
\dfrac{d}{dt}\left(\int_{\T^n}e^{-\frac{\al t}{\ep}}\sig^\ep\,dx\right)\ge0
\quad\text{ for all } \ t\in (0,1). 
\]
Thus, 
\[
\int_{\T^n}\sig^\ep(x,t) \,dx\le e^{-\frac{\al(1-t)}{\ep}} \quad\text{ for all } \ t\in (0,1),
\]
which gives
\[
\int_{\T^n}\,d\nu^\ep(x)
=\int_0^1\int_{\T^n}\sig^\ep(x,t) \,dxdt 
\leq \frac{\ep}{\al}(1-e^{-\frac{\al}{\ep}}).
\]
Sending $\ep\to0$ yields the conclusion. 
\end{proof}

Basically, Proposition \ref{prop:st-inc} says that in the strictly monotone setting, if $u_1, u_2$ are solutions to \eqref{E-0}, there is no need to compare $u_1$ and $u_2$ anywhere, and we have immediately $u_1=u_2$, which of course means that we have the unique viscosity solution to \eqref{E-0}. 

\subsection{Prototype example -- Revisit}\label{subsec:structure2}
Let us now revisit our prototype example in Section \ref{sec:prototype}.
Since we need smoothness of $H$, we consider
\[
H(x,r,p)=|p|^m - V(x) + f(r) \quad \text{ for } (x,r,p) \in \T^n \times \R \times \R^n.
\]
Here, $m \geq 2$ is a given number, and $V \in C^2(\T^n)$ is the potential energy with $\min_{\T^n} V=0$.
The function $f \in C^2(\R)$ is convex,  and
\[
\begin{cases}
f(r)=0 \quad &\text{ for } r \leq 0,\\
f(r)>0 \quad &\text{ for } r>0.
\end{cases}
\]
It is clear that $f'(r)>0$ for $r>0$, and $f'$ is nondecreasing.
In particular,
\begin{equation}\label{eq:f-p}
f'(s) \geq f'(r)>0 \quad \text{ for all } s\geq r>0.
\end{equation}
This observation will be used later on.
An example of $f \in C^2(\R)$ satisfying the above is $f(r) = (\max\{r,0\})^3$.

\begin{prop}\label{prop:prot}
Assume the setting in this subsection.
Then, for each $\nu \in \cM$,
\[
{\rm supp}(\nu) \subset M_V=\left\{x\in \T^n\,:\, V(x) = \min_{\T^n} V =0\right\}.
\]
\end{prop}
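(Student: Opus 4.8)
The plan is to reduce the support statement to a single scalar estimate. Since $V\ge 0$ with $\{V=0\}=M_V$, proving $\supp(\nu)\subset M_V$ is equivalent to showing $\int_{\T^n} V\,d\nu = 0$: indeed, $\int_{\T^n}V\,d\nu=0$ together with $V\ge 0$ forces $V=0$ $\nu$-a.e., so $\nu$ charges no open set on which $V>0$. By Definition \ref{def:M} and the weak convergence $\nu^{\ep_j}\rightharpoonup\nu$, it therefore suffices to establish
\[
\int_0^1\int_{\T^n} V(x)\,\sig^\ep(x,t)\,dx\,dt \longrightarrow 0 \quad\text{as } \ep\to 0,
\]
where $\sig^\ep$ is the adjoint solution of \eqref{eq:sig} attached to a fixed solution $u$ and a point $x_0\in\T^n$.

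The core of the argument is to feed $\phi=w^\ep$ into the adjoint relation. Testing \eqref{eq:sig} against $w^\ep$ and integrating by parts on $\T^n$ yields the identity
\[
\ep\,\frac{d}{dt}\int_{\T^n} w^\ep\sig^\ep\,dx = \int_{\T^n}\cL^\ep[w^\ep]\,\sig^\ep\,dx .
\]
I would then compute $\cL^\ep[w^\ep]$ using equation \eqref{eq:C1} itself to eliminate the parabolic terms: since $\ep w^\ep_t-\ep^4\Del w^\ep = -H(x,w^\ep,Dw^\ep)$ and, for this prototype, $H_r=f'(w^\ep)$ and $D_pH\cdot Dw^\ep = m|Dw^\ep|^m$, one finds
\[
\cL^\ep[w^\ep] = (m-1)|Dw^\ep|^m + V(x) + \big(f'(w^\ep)\,w^\ep - f(w^\ep)\big).
\]
Integrating the identity over $t\in(0,1)$, the left-hand side equals $\ep\big(w^\ep(x_0,1)-\int_{\T^n}w^\ep(\cdot,0)\,\sig^\ep(\cdot,0)\,dx\big)$, which is $O(\ep)$ because $\|w^\ep\|_{L^\infty(\T^n\times[0,1])}$ is bounded (Lemma \ref{lem:ep}) and $\int_{\T^n}\sig^\ep(\cdot,0)\,dx\le 1$ (Proposition \ref{prop:sig}).

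The decisive point is that every summand in the expression for $\cL^\ep[w^\ep]$ is pointwise nonnegative. Now $(m-1)|Dw^\ep|^m\ge 0$ and $V\ge 0$ are clear, while the convexity of $f$ together with $f(0)=0$ gives the elementary bound $f'(r)\,r-f(r)\ge 0$ for every $r\in\R$ (for $r\le 0$ both terms vanish, and for $r>0$ it is the tangent-line inequality $f(0)\ge f(r)-f'(r)\,r$). Since $\sig^\ep\ge 0$, it follows that $\cL^\ep[w^\ep]\ge V$ and hence
\[
0\le \int_0^1\int_{\T^n} V\,\sig^\ep\,dx\,dt \le \int_0^1\int_{\T^n}\cL^\ep[w^\ep]\,\sig^\ep\,dx\,dt = O(\ep),
\]
which is exactly the estimate sought. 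Passing to the subsequence $\{\ep_j\}$ defining $\nu$ gives $\int_{\T^n}V\,d\nu=0$, and therefore $\supp(\nu)\subset M_V$.

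I expect the main care to be technical rather than conceptual: justifying the adjoint identity and the time-boundary evaluations rigorously. For fixed $\ep>0$ the equation \eqref{eq:C1} is uniformly parabolic, so $w^\ep$ is smooth, and $\sig^\ep$ is smooth and positive on $\T^n\times(0,1)$ while degenerating to $\del_{x_0}$ as $t\uparrow 1$; thus the integration by parts in $x$ is harmless and the endpoint value at $t=1$ is read off as $w^\ep(x_0,1)$. The one genuinely structural ingredient is the convexity inequality $f'(r)\,r\ge f(r)$, which is where the convexity of $f$ in (H5) and the normalization $f(0)=0$ enter; this is precisely the mechanism by which the $f$-contribution cannot spoil nonnegativity, and by which the mass of $\nu$ is squeezed onto $M_V$. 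The restriction $m\ge 2$ here guarantees $H\in C^2$ in $p$, so that the whole adjoint construction of Section \ref{sec:structure} applies.
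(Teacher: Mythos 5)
Your proof is correct, but it follows a genuinely different route from the paper's. The paper tests the adjoint equation against the constant function $1$: integrating \eqref{eq:sig} over $\T^n$ and then over $t\in[0,1]$ gives $\int_0^1\int_{\T^n} f'(w^\ep)\sig^\ep\,dx\,dt\le\ep$, hence $\int_{\T^n}f'(u)\,d\nu=0$ and ${\rm supp}(\nu)\subset\{u\le 0\}$; since $\{u\le 0\}$ can be all of $\T^n$ for a badly chosen solution, the paper must then manufacture a special solution $u_0$ (as the limit of the unique solutions $u_c$ as $c\downarrow 0$) for which $\{u_0\le 0\}=\{u_0=0\}=M_V$. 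You instead test the adjoint equation against $w^\ep$ itself, use \eqref{eq:C1} to eliminate the parabolic terms, and obtain the exact decomposition $\cL^\ep[w^\ep]=(m-1)|Dw^\ep|^m+V+\bigl(f'(w^\ep)w^\ep-f(w^\ep)\bigr)$, each summand nonnegative (the last by the tangent-line inequality with $f(0)=0$), so that $\int_0^1\int_{\T^n}V\sig^\ep\,dx\,dt=O(\ep)$ and $\int_{\T^n}V\,d\nu=0$ directly. What your approach buys: it isolates $V$ without any detour through a distinguished solution, and it yields ${\rm supp}(\nu)\subset M_V$ for every $\nu\in\cM$ regardless of which generating solution $u$ produced $\nu$ --- a point on which the paper's argument is more delicate, since its intermediate conclusion ${\rm supp}(\nu)\subset\{u\le 0\}$ is only derived for the $u$ used to build $\nu$. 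What it costs: your computation leans on the additive, separable structure of the prototype Hamiltonian and the normalization $f(0)=0$, whereas the paper's first step ($\int f'(u)\,d\nu=0$) is just the generic statement from Proposition \ref{prop:sig} that adjoint measures concentrate where the $r$-monotonicity degenerates, and so generalizes more readily beyond (H6). One cosmetic slip: the convexity of $f$ you invoke comes from the prototype assumption (H6), not from (H5).
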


\begin{proof}
Let $u$ be a solution of \eqref{E-0}.
Let $w^\ep$ be the solution to \eqref{eq:C1} with initial data $u^{\ep^4}$.
The corresponding adjoint equation is
\[
-\ep \sig^\ep_t + f'(w^\ep) \sig^\ep -\Div(m |Dw^\ep|^{m-2} Dw^\ep \sig^\ep) = \ep^4 \Del \sig^\ep.
\]
Integrate this on $\T^n$ to get that
\[
\ep \frac{d}{dt} \int_{\T^n} \sig^\ep(x,t)\,dx  = \int_{\T^n} f'(w^\ep) \sig^\ep\,dx.
\]
Next, integrate the above in $t$ on $[0,1]$ to deduce further that
\[
\int_0^1 \int_{\T^n} f'(w^\ep) \sig^\ep \,dxdt = \ep \left(1 - \int_{\T^n} \sig^\ep (x,0)\,dx\right) \leq \ep.
\]
Letting $\ep  = \ep_j \to 0$ to yield, thanks to Lemma \ref{lem:ep},
\[
\int_{\T^n} f'(u)\, d\nu = 0.
\]
Thus, by using \eqref{eq:f-p}, we arrive at the fact that ${\rm supp} (\nu) \subset \{ u \leq 0\}$ for each $u \in \cS$.
We plan to pick an appropriate solution $u$ in $\cS$  to conclude.

Now, for each $c>0$, let $(u_c,c)$ be the unique solution to (E). 
Note that $u_c>0$.
Of course, for $c\in (0,1]$,
there exists $C>0$ independent of $c$ such that
\[
\|u_c\|_{L^\infty(\T^n)} + \|Du_c\|_{L^\infty(\T^n)}  \leq C.
\]
By using the Arzel\`a-Ascoli theorem, and passing to a subsequence if needed, $u_c \to u_0$ uniformly in $\T^n$ as $c \to 0$.
By stability of viscosity solutions, $u_0$ is a solution of \eqref{E-0}. 
It is clear that $u_0 \geq 0$, and 
\[
f(u_0) \leq V \quad \text{ in } \T^n,
\]
which gives that $\{V=0\} \subset \{u_0=0\}$.

Besides, for any $x_1 \in \T^n$ such that $u_0(x_1) = 0 = \min_{\T^n} u_0$, by the viscosity supersolution test,
\[
0=f(0)=f(u_0(x_1)) \geq V(x_1) \quad \Rightarrow \quad  V(x_1)=0.
\]
Thus, $\{V=0\} = \{u_0=0\}$, and hence, ${\rm supp} (\nu) \subset M_V=\{V=0\}$.
\end{proof}

This last proposition is consistent with the result of Proposition \ref{prop:unique-0}.
It is clear that we get $M \subset M_V$.
Nevertheless, we do not get that $M=M_V$ here, and it is not clear if this holds in general.
It would be very interesting if there is an example where $M \subsetneq M_V$.

\section*{Acknowledgments}

The work of WJ has been supported by the Recruitment Program of Global Experts of China and by the National Natural Science Foundation of China under Grant No.\,11701314.
The work of HM was partially supported by the JSPS grant KAKENHI \#16H03948.
The work of HT is partially supported by NSF grant DMS-1664424.

\bibliographystyle{amsplain}

\begin{thebibliography}{1}






\bibitem{CGMT}
F. Cagnetti, D. Gomes, H. Mitake, H. V. Tran,
A new method for large time behavior of degenerate viscous Hamilton--Jacobi equations with convex Hamiltonians,
\emph{Annales de l\'{}Institut Henri Poincar\'e - Analyse non lin\'eaire} 32 (2015), 183--200.

\bibitem{CCIZ}
Q. Chen, W. Cheng, H. Ishii, K. Zhao,
\emph{Vanishing contact structure problem and convergence of the viscosity solutions}, 
preprint (arXiv:1808.06046).

\bibitem{CEL}
M. G. Crandall, L. C. Evans, P.-L. Lions, 
\emph{Some properties of viscosity solutions of Hamilton-Jacobi equations}, 
Trans. Am. Math. Soc. 282, 487--502 (1984).

\bibitem{CL}
M. G. Crandall, P.-L. Lions,  
\emph{Viscosity solutions of Hamilton-Jacobi equations}, 
Trans. Am. Math. Soc. 277, 1--42 (1983).

\bibitem{Ev1}
L. C. Evans,
\emph{Adjoint and compensated compactness methods for Hamilton-Jacobi PDE}, 
Arch. Rat. Mech. Anal. {\bf 197} (2010), 1053--1088.

\bibitem{FaB}
A. Fathi,
 Weak KAM Theorem in Lagrangian Dynamics.
 
 \bibitem{G2}
D. A. Gomes, 
\emph{Generalized Mather problem and selection principles for viscosity solutions and Mather measures}, Adv. Calc. Var. 1 (2008), no. 3, 291--307. 

\bibitem{GMT}
D. A. Gomes, H. Mitake,   H. V. Tran,
\emph{The selection problem for discounted Hamilton-Jacobi equations: some non-convex cases},
 J. Math. Soc. Japan, 70(1):345--364, 2018.
 
 
 \bibitem{Is}
 H. Ishii,
 \emph{Perron's method for Hamilton-Jacobi equations},
 Duke Math. Journal 55 (1987), no. 2, 369--384.

\bibitem{IMT1}
H. Ishii, H. Mitake, H. V. Tran, 
\emph{The vanishing discount problem and viscosity Mather measures. Part 1: the problem on a torus},
{J. Math. Pures Appl. (9)},   108 (2017), no. 2, 125--149.


\bibitem{LMT}
N. Q. Le, H. Mitake, H. V. Tran,
Dynamical and Geometric Aspects of Hamilton-Jacobi and Linearized Monge-Amp\`ere Equations,
Lecture Notes in Mathematics 2183, Springer.

\bibitem{LPV}  
P.-L. Lions, G. Papanicolaou, S. R. S. Varadhan,  
\emph{Homogenization of Hamilton--Jacobi equations}, unpublished work (1987). 

\bibitem{Man}
R. Ma\~n\'e, 
\emph{Generic properties and problems of minimizing measures of Lagrangian systems}.  Nonlinearity 9 (1996), no. 2, 273--310. 

\bibitem{M}
J. N. Mather, 
\emph{Action minimizing invariant measures for positive definite Lagrangian systems}, 
Math. Z. 207 (1991), no. 2, 169--207.  

\bibitem{MT-A}
H. Mitake, H. V. Tran, 
\emph{Selection problems for a discount degenerate viscous Hamilton--Jacobi equation },
Adv. Math., {\bf306} (2017), 684--703.

\bibitem{MT-P}
H. Mitake, H. V. Tran,
\emph{On uniqueness sets of additive eigenvalue problems and applications},
Proc. AMS, 146 (2018), no. 11, 4813--4822.

\bibitem{NR}
G. Namah, J.-M. Roquejoffre, 
\emph{Remarks on the long time behaviour of the solutions of Hamilton-Jacobi equations}, 
Comm. Partial Differential Equations 24 (1999), no. 5-6, 883--893. 


\bibitem{SWY}
X. Su, L. Wang, J. Yan,
\emph{Weak KAM theory for Hamilton-Jacobi equations depending on unknown functions},
Discrete and Continuous Dynamical Systems 36(11):6487--6522, 2016.

\bibitem{T}
K. Terai, 
\emph{Uniqueness structure of weakly coupled systems of ergodic problems of Hamilton-Jacobi equations}, preprint (arXiv:1901.05314). 

\bibitem{T1}
H. V. Tran, 
\emph{Adjoint methods for static Hamilton-Jacobi equations},
 Calculus of Variations and PDE {\bf 41} (2011), 301--319. 

\bibitem{WWY-0}
K. Wang, L. Wang, J. Yan,
\emph{Variational principle for contact Hamiltonian systems and its applications}, 
Journal de Math\'ematiques Pures et Appliqu\'ees, to appear.


\bibitem{WWY-1}
K. Wang, L. Wang, J. Yan,
\emph{Aubry-Mather and weak KAM theories for contact Hamiltonian systems. Part 1: Strictly increasing case},
preprint (arXiv:1801.05612).



\end{thebibliography}
\providecommand{\bysame}{\leavevmode\hbox to3em{\hrulefill}\thinspace}
\providecommand{\MR}{\relax\ifhmode\unskip\space\fi MR }
\providecommand{\MRhref}[2]{%
  \href{http://www.ams.org/mathscinet-getitem?mr=#1}{#2}
}
\providecommand{\href}[2]{#2}

\end{document}